\numberwithin{equation}{section}
\theoremstyle{plain}
\newtheorem{theorem}{Theorem}[section]
\newtheorem{lemma}[theorem]{Lemma}
\newtheorem*{Froslemma}{Frostman Lemma}
\theoremstyle{definition}
\newtheorem{case[theorem]}{Case}
\theoremstyle{remark}
\numberwithin{equation}{section}
\def\dH{\dim_{{\mathcal H}}}
\def\R{\Bbb R}
\def\e{\epsilon}
\begin{document}

\title{Improvement on $2$-chains inside thin subsets of Euclidean spaces} 


\author{Bochen Liu}

\date{today}

\keywords{}

\email{Bochen.Liu1989@gmail.com}
\address{Department of Mathematics, Bar-Ilan University, Ramat Gan, Israel}


\begin{abstract}
We prove that if the Hausdorff dimension of $E\subset\mathbb{R}^d$, $d\geq 2$ is greater than $\frac{d}{2}+\frac{1}{3}$, the set of gaps of $2$-chains inside $E$,
$$\Delta_2(E)=\{(|x-y|, |y-z|): x, y, z\in E \}\subset\mathbb{R}^2$$
has positive Lebesgue measure. It generalizes Wolff-Erdogan's result on distances and improves a result of Bennett, Iosevich and Taylor on finite chains.

We also consider the similarity class of $2$-chains,
$$S_2(E)=\left\{\frac{t_1}{t_2}:(t_1,t_2)\in\Delta_2(E)\right\}=\left\{\frac{|x-y|}{|y-z|}: x, y, z\in E \right\}\subset\mathbb{R},$$
and show that $|S_2(E)|>0$ whenever $\dim_{\mathcal{H}}(E)>\frac{d}{2}+\frac{1}{7}$.
\end{abstract}
\maketitle
\section{Introduction}
\subsection{Falconer distance conjecture}
Given $E\subset\R^d$, $d\geq 2$, one can define its distance set as
$$ \Delta(E)=\{|x-y|: x,y \in E \}.$$

One of the most interesting open problems in geometric measure theory is the Falconer distance conjecture, which states that $\Delta(E)$ has positive Lebesgue measure whenever $\dH(E)>\frac{d}{2}$. The best currently known results are due to Wolff  (\cite{Wol99}) in two dimensions and Erdogan (\cite{Erd05}) in higher dimensions. They proved that $|\Delta(E)|>0$ if the Hausdorff dimension of $E$ is greater than $\frac{d}{2}+\frac{1}{3}$. 

Both Wolff and Erdogan used the paradigm to attack the Falconer distance problem invented by Mattila in \cite{Mat87}. That is, to show $\Delta(E)$ has positive Lebesgue measure, it suffices to show that there exists a measure $\mu$ on $E$ such that
\begin{equation} \label{mattilaintegral} {\mathcal M}(\mu)=\int {\left( \int_{S^{d-1}} {|\widehat{\mu}(r \omega)|}^2 d\omega \right)}^2 r^{d-1} dr<\infty. \end{equation}
We call $\mathcal{M}(\mu)$ the (classical) Mattila integral. It is widely known that for any $\epsilon>0$, there exists a measure $\mu$ on $E$, called Frostmen measure, such that the energy integral
$$I_{\dH(E)-\epsilon}(\mu) = \int |\widehat{\mu}(\xi)|^2 |\xi|^{-d+\dH(E)-\epsilon}\,d\xi<\infty.  $$
What Wolff and Erdogan proved is, for this Frostman measure $\mu$, the spherical average
\begin{equation}\label{Wolff-Erdogan}
\int_{S^{d-1}} {|\widehat{\mu}(r \omega)|}^2 d\omega\lesssim r^{-\frac{d+2\dH(E)-2}{4}+\epsilon}.
\end{equation}
Then the Mattila integral ${\mathcal M}(\mu)$ is bounded above by
$$\int \left(\int_{S^{d-1}} {|\widehat{\mu}(r \omega)|}^2 d\omega\right) r^{-\frac{d+2\dH(E)-2}{4}+\epsilon} \cdot r^{d-1} dr =I_{\frac{3d-2\dH(E)+2}{4}-\epsilon}(\mu)\lesssim I_{\dH(E)-\epsilon}(\mu)<\infty  $$
if $\dH(E)>\frac{d}{2}+\frac{1}{3}$.
\subsection{Falconer-type problems and the $\frac{d+1}{2}$ barrier}
In addition to distances, one can also consider other geometric notions, such as dot products, simplices, angles, etc. (see e.g. \cite{EIT11,GI12,GGIP15,GILP15,GILP16,IME16} and references therein). An interesting fact is, among all currently known results, the dimensional exponent $\frac{d+1}{2}$ appears again and again. For example, in the paper where Falconer came up with the distance conjecture (\cite{Fal85}), he showed that $\dH(E)>\frac{d+1}{2}$ is sufficient to make sure $|\Delta(E)|>0$. For a very large class of functions $\Phi(x,y):\R^d\times\R^d\rightarrow\R$, called generalized projections, Peres and Schlag (\cite{PS00}) showed that if $\dH(E)>\frac{d+1}{2}$,
$$|\Delta^y(E)|=|\{\Phi(x,y):x\in E\}|>0$$
for a lot of points $y\in E$. In \cite{IME16}, it is shown that, the set of angles determined by $E$,
$$\{\theta(x,y,z):x,y,z\in E\}=\left\{\arccos\frac{(x-y)\cdot(z-y)}{|x-y||z-y|} :x,y,z\in E\right\} $$
has positive Lebesgue measure whenever $\dH(E)>\frac{d+1}{2}$. In \cite{BIT16}, it is shown that if $\dH(E)>\frac{d+1}{2}$, then for any $k\in\mathbb{Z}^+$, the $k$-chain set,
\begin{equation}\label{kchainset}\Delta_k(E)=\{(|x^1-x^2|, |x^2-x^3|,\dots,|x^k-x^{k+1}|): x^j\in E\}\subset\R^k\end{equation}
has positive Lebesgue measure. One can also see \cite{EIT11}, \cite{ITU16}, \cite{IL17}, where $\frac{d+1}{2}$ is obtained in different cases, with different methods. 

Due to a counterexample of Falconer (see, e.g. \cite{Fal85}), $\frac{d}{2}$ is the expected dimensional threshold for all problems above. However, except the distances, right now people get stuck at $\frac{d+1}{2}$. Therefore a reasonable short-term goal is to beat the $\frac{d+1}{2}$ barrier.

\subsection{$2$-chains inside thin subsets of Euclidean spaces}
In this paper, we beat the $\frac{d+1}{2}$ barrier on $2$-chains. Let
$$\Delta_2(E)=\{(|x-y|, |y-z|):x,y,z\in E \}.$$
The idea in Bennett-Iosevich-Taylor's $\frac{d+1}{2}$ argument on $k$-chains(\cite{BIT16}) is, if we can show that for most $x^1\in E$, the pinned distance set
$$\Delta^{x^1}(E)=\{|x^2-x^1|:x^2\in E\}$$
has positive Lebesgue measure, then induction argument works by setting $x^2$ as the new "pin". However, as we mentioned above, the best known dimensional threshold for the pinned distance problem is $\frac{d+1}{2}$ (see, e.g., \cite{PS00}, \cite{ITU16}, \cite{IL17}) and it seems very hard to improve it. Since the $2$-chain problem is weaker than the pinned distance problem, one may wonder if other ideas could help.

In this paper, with an idea of group actions that will be explained in next subsection, we shall show that $\dH(E)>\frac{d}{2}+\frac{1}{3}$ is sufficient. Notice this dimensional threshold is the same as Wolff-Erdogan's bound on distances.
\begin{theorem}
	\label{main}
	Suppose $E\subset\R^d$, $d\geq 2$ and $\dH(E)>\frac{d}{2}+\frac{1}{3}$. Then $$\Delta_2(E):=\{(|x-y|, |y-z|):x,y,z\in E \}$$ has positive $2$-dimensional Lebesgue measure.
\end{theorem}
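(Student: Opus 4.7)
The plan is to push $\mu\otimes\mu\otimes\mu$ forward under $(x,y,z)\mapsto(|x-y|,|y-z|)$ to obtain a finite Borel measure $\nu$ on $\R^2$ supported in the closure of $\Delta_2(E)$, and to show that $\nu\in L^2(\R^2)$; this forces $\nu$ to be absolutely continuous with respect to Lebesgue measure, hence $|\Delta_2(E)|>0$. Here $\mu$ is a Frostman measure of dimension $s$ with $d/2+1/3<s<\dH(E)$.

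The first key observation is that $(|x-y|,|y-z|)$ depends on the outer variables $x,z$ only through the middle ``pin'' $y$, so the Fourier transform of $\nu$ factors as
\[
\hat\nu(\tau_1,\tau_2)=\int \widehat{\mu_y}(\tau_1)\,\widehat{\mu_y}(\tau_2)\,d\mu(y),
\]
where $\mu_y$ is the pinned distance measure $(x\mapsto|x-y|)_*\mu$. Expanding $|\hat\nu|^2$ and applying Plancherel in $(\tau_1,\tau_2)$, together with the identity $\mu_y(t)\sim t^{d-1}(\mu*\sigma_t)(y)$ for the unit-mass sphere measure $\sigma_t$ of radius $t$, yields
\[
\|\nu\|_{L^2(\R^2)}^2\;\sim\;\iint N(y,y')^2\,d\mu(y)\,d\mu(y'),\qquad N(y,y')=\int_0^\infty t^{2(d-1)}(\mu*\sigma_t)(y)\,(\mu*\sigma_t)(y')\,dt.
\]

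The heart of the argument is to bound this $4$-linear expression in $\mu$ by the classical Mattila integral $\mathcal{M}(\mu)=\int_0^\infty\bigl(\int_{S^{d-1}}|\hat\mu(r\omega)|^2\,d\omega\bigr)^2 r^{d-1}\,dr$. Taking the Fourier transform in $(y,y')$ brings in the kernel
\[
K(r,r')=\int_0^\infty t^{2(d-1)}\hat\sigma(tr)\hat\sigma(tr')\,dt,
\]
which, by the oscillatory asymptotic $\hat\sigma(\rho)\sim \rho^{-(d-1)/2}\cos(\rho-\phi_d)$, is concentrated along the diagonal $r=r'$; this concentration is the Fourier-side manifestation of the $SO(d)$-symmetry (the ``group action'' alluded to in the introduction) of the pinned sphere around $y$. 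Exploiting it through a dyadic decomposition in $r-r'$ together with Cauchy--Schwarz on the angular variables should collapse the $4$-linear integral to
\[
\iint N(y,y')^2\,d\mu(y)\,d\mu(y')\;\lesssim\;\mathcal{M}(\mu).
\]
The Wolff--Erdogan bound \eqref{Wolff-Erdogan} then gives $\mathcal{M}(\mu)<\infty$ whenever $s>d/2+1/3$, completing the proof.

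The main obstacle is precisely this reduction to $\mathcal{M}(\mu)$. A naive Cauchy--Schwarz $N(y,y')^2\leq N(y,y)N(y',y')$ reduces matters to the pinned-distance $L^2$-quantity $\int\|\mu_y\|_{L^2}^2\,d\mu(y)$, which is known to be finite only for $s>(d+1)/2$; this is exactly the Bennett--Iosevich--Taylor $\tfrac{d+1}{2}$ barrier. Going below it demands keeping the joint $(y,y')$-structure of $N$ intact and genuinely exploiting the oscillatory concentration of $K$ along the diagonal---effectively using the rotational symmetry about the middle pin to turn a $4$-linear Fourier integral into the $2$-linear spherical-average integral $\mathcal{M}(\mu)$.
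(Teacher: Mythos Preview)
Your setup is correct and coincides with the paper's Theorem~\ref{Matforchain}: the identity
\[
\|\nu\|_{L^2(\R^2)}^2\;\approx\;\iint N(y,y')^2\,d\mu(y)\,d\mu(y'),\qquad N(y,y')=\int_0^\infty t^{2(d-1)}(\mu*\sigma_t)(y)\,(\mu*\sigma_t)(y')\,dt,
\]
is exactly the Mattila-type integral derived there, since in polar coordinates $x=t\omega$ one has $\int_{O(d)}\int\mu(y+x)\,\mu(y'+\theta x)\,dx\,d\theta = c_d\int_0^\infty t^{d-1}(\mu*\omega_t)(y)(\mu*\omega_t)(y')\,dt$. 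So you have arrived at the same object.

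The gap is in the step you yourself flag as the obstacle. The proposed target inequality $\iint N^2\,d\mu\,d\mu\lesssim\mathcal{M}(\mu)$ is not the right goal: the left side is of degree six in $\mu$ while $\mathcal{M}(\mu)$ is of degree four, and your kernel argument implicitly treats the outer integration as if it were Lebesgue $dy\,dy'$ rather than $d\mu(y)\,d\mu(y')$. The diagonal concentration of $K(r,r')$ would indeed collapse the $(\xi,\eta)$-integral into a single spherical average \emph{if} Plancherel in $(y,y')$ were available---but it is not, and this is precisely where the fractal structure of $\mu$ has to enter. Nothing in the sketch explains how to pass from $\int|\cdot|^2\,d\mu$ to $\int|\cdot|^2\,dy$.

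The paper does \emph{not} reduce to $\mathcal{M}(\mu)$. After Plancherel in $x$ and a Littlewood--Paley decomposition $|\xi|\approx 2^j$, the two outer $d\mu$-integrations are handled by two separate restriction-type estimates for the Frostman measure: the elementary $\int_{|\xi|\le R}|\widehat{f\,d\mu}|^2\lesssim R^{d-s_\mu}\|f\|_{L^2(\mu)}^2$ and its dual absorb $d\mu(u')$, while an $L^2(\mu)$-\emph{weighted} form of Wolff--Erdogan,
\[
\int_{RS^{d-1}}|\widehat{f\,d\mu}|^2\,d\omega_R\;\lesssim_\epsilon\;R^{-\frac{d+2s_\mu-2}{4}+\epsilon}\,\|f\|_{L^2(\mu)}^2,
\]
together with its dual, absorbs $d\mu(u)$. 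Thus Wolff--Erdogan is applied \emph{twice}, each time in this weighted form, and combined with \emph{two} Frostman losses $R^{d-s_\mu}$; the resulting dyadic exponent is negative exactly when $s_\mu>\tfrac{d}{2}+\tfrac{1}{3}$. Your proposal is missing both ingredients: the $L^2(\mu)$-weighted spherical-average estimate (not merely the unweighted decay of $\int_{S^{d-1}}|\hat\mu|^2$) and the restriction lemmas that substitute for the unavailable Plancherel in $d\mu(y)\,d\mu(y')$.
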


\subsection{Similar $2$-chains}
We also consider the similarity class of $\Delta_2(E)$, that is equivalent to
$$\left\{\frac{t_1}{t_2}:(t_1,t_2)\in\Delta_2(E)\right\}=\left\{\frac{|x-y|}{|y-z|}: x, y, z\in E \right\}\subset\R.$$
\begin{theorem}
	\label{main2}
Suppose $E\subset\R^d$, $d\geq 2$ and $\dH(E)>\frac{d}{2}+\frac{1}{7}$. Then
$$S_2(E)=\left\{\frac{t_1}{t_2}:(t_1,t_2)\in\Delta_2(E)\right\}=\left\{\frac{|x-y|}{|y-z|}: x, y, z\in E \right\}\subset\R$$
has positive $1$-dimensional Lebesgue measure.
\end{theorem}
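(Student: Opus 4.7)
The plan is to prove that the natural push-forward of $\mu^{3}$ under $(x,y,z)\mapsto|x-y|/|y-z|$ is absolutely continuous with density in $L^{2}(\R)$, exploiting a dilation symmetry not available for $\Delta_{2}(E)$ itself. Let $\mu$ be a Frostman measure on $E$ of exponent just below $\dH(E)$, and let $g_{y}(t)$ denote the density of the pinned distance measure at $y$, so that $\int f(|x-y|)\,d\mu(x)=\int f(t)\,g_{y}(t)\,dt$. Writing the natural $2$-chain density as $h(t_{1},t_{2})=\int g_{y}(t_{1})g_{y}(t_{2})\,d\mu(y)$ and changing variables $t_{1}=sr$, $t_{2}=r$ in the push-forward under $(t_{1},t_{2})\mapsto t_{1}/t_{2}$ yields the similarity density
$$\rho(s)=\int\int_{0}^{\infty}r\,g_{y}(sr)\,g_{y}(r)\,dr\,d\mu(y);$$
it then suffices to show $\rho\in L^{2}(\R)$.

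\textbf{Log-scaling and Mellin step.} The extra integration in $r$, absent from $h$, is the source of improvement over the threshold $\frac{d}{2}+\frac{1}{3}$ of Theorem \ref{main}. Substituting $r=e^{u}$, $s=e^{v}$ and setting $\tilde g_{y}(u):=e^{u}g_{y}(e^{u})$, the log-density $\tilde\rho(v):=e^{v}\rho(e^{v})$ becomes a $\mu$-average of autocorrelations,
$$\tilde\rho(v)=\int\int \tilde g_{y}(u+v)\,\tilde g_{y}(u)\,du\,d\mu(y).$$
Plancherel in $v$ combined with Cauchy-Schwarz in $y$ gives
$$\|\tilde\rho\|_{L^{2}}^{2}\lesssim \mu(E)\int\int|F_{y}(\tau)|^{4}\,d\tau\,d\mu(y),\qquad F_{y}(\tau):=\int|x-y|^{-i\tau}\,d\mu(x),$$
which is the Mellin transform of $g_{y}$ on the critical line $\operatorname{Re}=1$.

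\textbf{Fourier conversion and Wolff--Erdogan.} The distributional identity $\widehat{|x|^{i\tau}}(\xi)=c(\tau)|\xi|^{-d-i\tau}$ on $\R^{d}$, with the singular factor $|\xi|^{-d}$ regularized by a cutoff together with the Frostman decay of $\widehat\mu$, rewrites $F_{y}(\tau)$ as an oscillatory Fourier integral of $|\xi|^{-d-i\tau}\overline{\widehat\mu(\xi)}$ evaluated at $y$. Squaring and applying Plancherel once more (against $\mu$ in the variable $y$) reduces the upper bound above to a multi-scale integral involving the spherical averages $\int_{S^{d-1}}|\widehat\mu(r\omega)|^{2}\,d\omega$ at correlated radii. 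Inserting the Wolff-Erdogan bound \eqref{Wolff-Erdogan} and optimizing the exponents reduces finiteness to a single energy condition $I_{\alpha}(\mu)<\infty$ with $\alpha<\dH(E)$, which works out precisely at the threshold $\dH(E)>\frac{d}{2}+\frac{1}{7}$.

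\textbf{Main obstacle.} The technical heart is the careful tracking of the twofold gain from dilation averaging --- once in the $r$-integration defining $\rho$, and once more via Plancherel in the log variable --- and a dyadic decomposition in $r$ matched to the Wolff-Erdogan decay rate so that both gains are realized simultaneously. Verifying that the optimized exponent is exactly $\frac{1}{7}$ (rather than $\frac{1}{3}$ or $\frac{1}{5}$) requires delicate bookkeeping of exponents, in particular $\tau$-uniform control of the regularized kernel $|\xi|^{-d-i\tau}$ and of the Fourier constants $c(\tau)$ in the Mellin-to-Fourier conversion.
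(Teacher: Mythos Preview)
Your setup in steps 1--2 is correct and captures the same dilation gain the paper exploits: the extra $r$-integration is indeed what lowers the threshold from $\tfrac{1}{3}$ to $\tfrac{1}{7}$. But the argument breaks at step 3, and steps 4--6 are too vague to verify.

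\textbf{The Cauchy--Schwarz in $y$ is lossy.} Before Cauchy--Schwarz you have
\[
\|\tilde\rho\|_{L^2}^2=\int_\tau\Bigl(\int|F_y(\tau)|^2\,d\mu(y)\Bigr)^2d\tau
=\iint\Bigl(\int_\tau|F_y(\tau)|^2|F_{y'}(\tau)|^2\,d\tau\Bigr)\,d\mu(y)\,d\mu(y'),
\]
which is (up to Jacobians) exactly the paper's integral \eqref{matdiv} with the pair $(u,u')$ playing the role of $(y,y')$. The paper's proof of \eqref{w-ineq2} treats the two base points \emph{asymmetrically}: the $u'$-integration is handled by the crude restriction bound (Lemma~\ref{lemma2}, cost $R^{d-s_\mu}$), while the $u$-integration uses the Wolff--Erdogan restriction (Lemma~\ref{lemma4}, gain $R^{-(d+2s_\mu-2)/4}$). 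Then the $r$-integration converts the remaining spherical average into an annular one, and the exponent count $\frac{7d-14s_\mu+2}{4}$ falls out with \emph{one} application of Wolff--Erdogan and three of the elementary $L^2$ bounds. Your Cauchy--Schwarz collapses $(y,y')$ to a single $y$, destroying precisely this asymmetric structure. After it you are bounding $\int_y\int_\tau|F_y(\tau)|^4$, and since $|F_y(\tau)|\le 1$ trivially, this is $\le\int_y\int_\tau|F_y|^2\approx\int_y\|g_y\|_{L^2}^2$, the integrated pinned-distance $L^2$ norm --- which is only known finite for $s_\mu>\frac{d+1}{2}$, no better than Bennett--Iosevich--Taylor. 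You give no mechanism to do better.

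\textbf{The Mellin-to-Fourier step is not workable as stated.} The constant satisfies $|c(\tau)|\sim|\tau|^{d/2}$ by Stirling, so writing $|F_y|^4=|c(\tau)|^4|G_\tau(y)|^4$ and estimating the factors separately throws away the oscillatory cancellation that makes $|F_y|\le 1$ in the first place; the $\tau$-integral then diverges. Moreover, ``Plancherel against $\mu$ in $y$'' does not apply to an $L^4(\mu)$ quantity --- you would need to control a four-fold convolution of $|\xi|^{-d-i\tau}\overline{\widehat\mu}$ tested against $\widehat\mu$, and you do not indicate how. Finally, the ``dyadic decomposition in $r$'' you mention is inconsistent with your own log-substitution, which has already turned multiplicative $r$-structure into additive $u$-structure; the decomposition that actually matters (and that the paper uses) is in the spatial frequency $|\xi|$, which is unrelated to the Mellin frequency $\tau$.

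The fix is to drop the Cauchy--Schwarz and keep both $y,y'$, at which point your expression coincides with the paper's Theorem~\ref{Matforsimilarchain}; the paper then works entirely on the spatial Fourier side with a dyadic decomposition in $|\xi|$, never introducing the Mellin variable at all.
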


\subsection{Group actions and generalized Mattila integrals}
While the classical Mattila integral \eqref{mattilaintegral} and its connection with the distance problem can be derived directly (\cite{Mat87},\cite{Wol03}), authors in \cite{GILP15} take a geometric point of view, that has been proved so useful in the solution of the Erd\H{o}s distance conjecture (\cite{GK15}). 

Notice that $|x-y|=|x'-y'|$ if and only if there exists $\theta\in O(d)$ such that $x-\theta x'=y-\theta y'$. So we can work on the orthogonal group $O(d)$ to count the repetition of distances. With this idea, together with other brilliant ideas and techniques, Guth and Katz solved the Erd\H{o}s distance conjecture in the plane, that is, for any finite set $P\subset\R^2$,
	$$\#(\Delta(P))=\#\{|x-y|:x,y\in P\}\geq C_\epsilon \#(P)^{1-\epsilon}.$$

The key observation in \cite{GILP15} is, the Mattila integral \eqref{mattilaintegral} can be written as an integral with the Haar measure on $O(d)$ involved, i.e.
$$\int_0^\infty \left(\int_{S^{d-1}}|\hat{\mu}(r\omega)|^2\,d\omega\right)^2\,r^{d-1}\,dr= c_d\int |\hat{\mu}(\xi)|^2\left(\int_{O(d)}|\hat{\mu}(\theta\xi)|^2\,d\lambda_{O(d)}(\theta)\right)\,d\xi. $$

With this observation, authors in \cite{GILP15} developed a generalized version of Mattila integral to study the set of simplices. Recently, with a very simple argument, the author (\cite{Liu17}) gave an alternative derivation of the Mattila integral and generalizes it to the case
$$\Delta_\Phi(E_1,\dots,E_{k+1})= \{\Phi(x^1,\dots,x^{k+1}):x^j\in E_j \},$$
where $\Phi:\R^{d(k+1)}\rightarrow \R^m$ satisfies that $$\Phi(x^1,x^2,\dots,x^{k+1})=\Phi(y^1,y^2,\dots,y^{k+1})$$ if and only if $$(y^1,y^2,\dots,y^{k+1})=(gx^1,gx^2,\dots,gx^{k+1})$$ for some $g\in G$, a group admitting Haar measures.
\vskip.125in
In this paper, we consider chains inside $E$ of length $2$, that is, $\Phi(x,y,z)=(|x-y|, |y-z|)$. Unfortunately, since $x,y,z$ are not symmetric, we cannot find a group $G$ such that 
\begin{equation}\label{equal}
(|x-y|, |y-z|)= (|x'-y'|, |y'-z'|)
\end{equation}
if and only if $gx=x'$, $gy=y'$, $gz=z'$ for some $g\in G$. But the idea of group actions still helps because we can parametrize the surface
$$\{(x,y,z): (|x-y|, |y-z|)=\vec{t}\}$$ 
by
$$\{(u+x,u,u+z): u\in\R^d, (|x|, |z|)=\vec{t}\}$$ 
and
$$\{(u+\theta_1 x_{\vec{t}}, u, u+\theta_2 z_{\vec{t}}) : u\in\R^d, \theta_1, \theta_2\in O(d) \}, $$
where $(x_{\vec{t}}, z_{\vec{t}})$ is any fixed point such that $(|x_{\vec{t}}|, |z_{\vec{t}}|)=\vec{t}$.

We need more notations. Let $\phi\subset C_0^\infty$, $\int\phi=1$ and denote $\phi^\epsilon=\frac{1}{\epsilon^d}\phi(\frac{\cdot}{\epsilon})$. For any probability measure $\mu$ on $E$, denote $\mu^\epsilon = \mu*\phi^\epsilon\in C_0^\infty(\R^d)$. Define $\nu^\epsilon$ on the $\epsilon$-neighborhood of $\Delta_2(E)$ as
\begin{equation}\label{measureforchain}\int_{\R^2} F(\vec{t})\,d\nu^\epsilon(\vec{t}) = \iiint F((|x-y|, |y-z|))\,\mu^\epsilon(x)\,\mu^\epsilon(z)\,dx\,d\mu(y)\,dz.\end{equation}
\begin{theorem}
	\label{Matforchain}
With notations above.
$$\int_{\R^2} |\nu^\epsilon(\vec{t})|^2\,d\vec{t}\approx\iint \left|\int_{O(d)}\int \mu^\epsilon(u+x)\mu^\epsilon(u'+\theta x)\,dx\,d\theta \right|^2 d\mu(u)\,d\mu(u').  $$
Moreover, if there exists a measure $\mu$ on $E$ such that the right hand side is bounded above uniformly in $\epsilon$, then $\Delta_2(E)$ has positive Lebesgue measure.
\end{theorem}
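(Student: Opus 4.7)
The plan is to compute $\int_{\R^2} |\nu^\epsilon(\vec{t})|^2 \, d\vec{t}$ directly from the definition \eqref{measureforchain}, by exploiting the two parametrizations of the level set $\{(x,y,z) : (|x-y|,|y-z|) = \vec{t}\}$ displayed in the excerpt. First I would freeze $y$ in \eqref{measureforchain}, translate $x \mapsto x-y$ and $z \mapsto z-y$, and pass to spherical coordinates $x - y = r_1\omega_1$, $z - y = r_2\omega_2$ (the first, ``spherical'', parametrization). Since the two sphere integrals decouple, the Lebesgue density of $\nu^\epsilon$ with respect to $dr_1\,dr_2$ factors as
\begin{equation*}
\nu^\epsilon(r_1, r_2) \;=\; r_1^{d-1}\, r_2^{d-1} \int \Sigma(y, r_1)\, \Sigma(y, r_2)\, d\mu(y), \qquad \Sigma(y,r) := \int_{S^{d-1}} \mu^\epsilon(y+r\omega)\,d\omega.
\end{equation*}
Squaring, integrating in $(r_1, r_2)$, and applying Fubini then produces
\begin{equation*}
\int_{\R^2} |\nu^\epsilon|^2\, d\vec{t} \;=\; \iint \left(\int_0^\infty r^{2d-2}\, \Sigma(u,r)\,\Sigma(u',r)\,dr\right)^{\!2} d\mu(u)\,d\mu(u').
\end{equation*}

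To match this with the right-hand side of the theorem, I would rewrite $A(u,u') := \int_{O(d)}\int \mu^\epsilon(u+x)\mu^\epsilon(u'+\theta x)\,dx\,d\theta$ using polar coordinates in $x$ and the basic identity $\int_{O(d)} g(\theta\omega)\,d\theta = |S^{d-1}|^{-1}\int_{S^{d-1}} g\,d\omega'$ (with normalized Haar measure), which is the analytic content of the second, ``group-action'', parametrization. This immediately yields
\begin{equation*}
A(u,u') \;=\; \frac{1}{|S^{d-1}|} \int_0^\infty r^{d-1}\, \Sigma(u,r)\,\Sigma(u',r)\,dr.
\end{equation*}
Since $E$ may be assumed compact, both $r$-integrals above are effectively supported on $r \in [0,\operatorname{diam} E]$, on which $r^{2d-2} \lesssim r^{d-1}$; combining this with the previous display gives the required estimate
$$\int |\nu^\epsilon|^2\, d\vec{t} \;\lesssim\; \iint |A(u,u')|^2\, d\mu(u)\, d\mu(u').$$

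For the ``moreover'' clause, the hypothesis gives a uniform bound on $\|\nu^\epsilon\|_2$, while $\|\nu^\epsilon\|_1 = 1$ follows immediately from \eqref{measureforchain} since $\int\mu^\epsilon = 1$ and $\mu$ is a probability measure. The support of $\nu^\epsilon$ is contained in the $O(\epsilon)$-neighbourhood $N_\epsilon$ of $\Delta_2(E)$, so by Cauchy--Schwarz $|N_\epsilon| \geq \|\nu^\epsilon\|_1^2/\|\nu^\epsilon\|_2^2 \gtrsim 1$ uniformly in $\epsilon$. Taking $E$, and hence $\Delta_2(E)$, to be compact, monotone convergence yields $|N_\epsilon| \searrow |\Delta_2(E)|$, so $|\Delta_2(E)| > 0$. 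The main technical obstacle is not conceptual but organisational: one must track the spherical Jacobian factors $r^{d-1}$ and the normalization of Haar measure when translating between the sphere and $O(d)$. Once this bookkeeping is dispatched, the theorem is essentially a Fubini computation expressing the two parametrizations of the level set in the excerpt.
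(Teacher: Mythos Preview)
Your argument is correct and is essentially the paper's own computation made concrete: where the paper invokes the coarea formula and abstract Haar-measure invariance to pass between the two parametrizations of the level set, you pass to explicit polar coordinates and use the elementary identity $\int_{O(d)} g(\theta\omega)\,d\theta = |S^{d-1}|^{-1}\int_{S^{d-1}} g\,d\omega'$; for $\Phi_y(x,z)=(|x-y|,|z-y|)$ the coarea formula \emph{is} polar coordinates, so the two routes coincide. One remark: the theorem asserts $\approx$, but your comparison $r^{2d-2}\lesssim r^{d-1}$ on $[0,\operatorname{diam} E]$ only delivers the upper bound $\int|\nu^\epsilon|^2\lesssim\iint|A(u,u')|^2\,d\mu\,d\mu$ (the reverse pointwise inequality fails for small $r$); however only this direction is used for the ``moreover'' clause and in the later estimates of Section~4, and your Cauchy--Schwarz argument with $|N_\epsilon|\searrow|\Delta_2(E)|$ handles that clause correctly.
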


For similar $2$-chains, one can define
$$\int_{\R} F(t)\,d\tilde{\nu}^\epsilon(t) = \iiint F(\frac{|x-y|}{|y-z|})\,\mu^\epsilon(x)\,\mu^\epsilon(z)\,dx\,d\mu(y)\,dz$$
and obtain the following integral.
\begin{theorem}
	\label{Matforsimilarchain}
With notations above, 
\begin{equation}\label{matdiv}\int_{\R} |\tilde{\nu}^\epsilon(t)|^2\,dt\approx\int_{\R}\iint \left|\iint \mu^\epsilon(u+x)\mu^\epsilon(u'+r\theta x)\,dx\,d\theta \right|^2 d\mu(u)\,d\mu(u')\,dr.  \end{equation}
Moreover, if the right hand side is bounded above uniformly in $\epsilon$, then $$S_2(E)=\left\{\frac{|x-y|}{|y-z|}: x, y, z\in E \right\}\subset\R$$
has positive Lebesgue measure.
\end{theorem}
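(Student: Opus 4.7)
The plan is to mirror the proof of Theorem~\ref{Matforchain}, incorporating the additional dilation degree of freedom that appears when we pass from the pair of distances $(|x-y|,|y-z|)$ to their ratio.

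First I would realize $\tilde\nu^\epsilon$ as an absolutely continuous measure with an explicit density. Setting $a=x-y$, $b=z-y$, $u=y$ in the defining identity gives
\[
\int F(t)\,d\tilde\nu^\epsilon(t) = \iiint F\!\left(\frac{|a|}{|b|}\right)\mu^\epsilon(u+a)\,\mu^\epsilon(u+b)\,d\mu(u)\,da\,db.
\]
Using polar coordinates $a=s_1\omega_1$, $b=s_2\omega_2$ and then the change of variables $(s_1,s_2)\mapsto(t,r):=(s_1/s_2,s_2)$, the combined Jacobian works out to $t^{d-1}r^{2d-1}$, so that
\[
\tilde\nu^\epsilon(t) = c_d\,t^{d-1}\int d\mu(u)\int_0^\infty r^{2d-1}\,\psi^\epsilon(u,rt)\,\psi^\epsilon(u,r)\,dr,
\]
where $\psi^\epsilon(u,s):=\int_{S^{d-1}}\mu^\epsilon(u+s\omega)\,d\omega$ is the (unnormalized) spherical average of $\mu^\epsilon$.

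Next I would compute $\int_{\R}|\tilde\nu^\epsilon(t)|^2\,dt$ by squaring this formula, producing two independent copies of $(u,r,\omega_1,\omega_2)$, say $(u,r,\omega_1,\omega_2)$ and $(u',r',\omega_1',\omega_2')$, and integrating in $t$. The key manipulations are (i) the substitution $\tau=rt$ in the inner $t$-integral, which strips the $t$-dependence off one factor and leaves a scaled argument $(r'/r)\tau$ in the other, and (ii) the substitution $\rho=r'/r$, which replaces the two independent dilations $(r,r')$ by a single ratio $\rho$. Rewriting the spherical integrals as $O(d)$-integrals via $\omega=\theta e$ for a fixed unit vector $e$ and using Haar invariance to absorb all but one rotation, the integral collapses to
\[
\int_{\R} d\rho\,\iint d\mu(u)\,d\mu(u')\left|\iint \mu^\epsilon(u+x)\,\mu^\epsilon(u'+\rho\theta x)\,dx\,d\theta\right|^2,
\]
which is the right-hand side of \eqref{matdiv} up to a multiplicative constant depending only on $d$ and $\mathrm{diam}(E)$. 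The group-theoretic content is that the symmetry group of the level set $\{|a|/|b|=t\}\subset\R^d\times\R^d$ is the enlarged similarity group $\R^+\times O(d)\times O(d)$; the two rotations collapse to one via Haar absorption (as in Theorem~\ref{Matforchain}), while the extra $\R^+$ survives as the outer variable $\rho$.

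For the second half I would argue as in Theorem~\ref{Matforchain}. Uniform boundedness of the RHS of \eqref{matdiv} in $\epsilon$ yields uniform $L^2(\R)$-boundedness of the densities $\tilde\nu^\epsilon$; weak-$*$ sequential compactness gives a cluster point $g\in L^2(\R)$. On the other hand, testing against continuous compactly supported functions shows that $\tilde\nu^\epsilon$ converges weakly, as Radon measures, to the pushforward $\tilde\nu=\Phi_*(\mu\otimes\mu\otimes\mu)$ with $\Phi(x,y,z)=|x-y|/|y-z|$, whose support lies in $\overline{S_2(E)}$. Uniqueness of the limit identifies $\tilde\nu$ with $g\cdot dt$, so $\tilde\nu\ll dt$ and hence $|\overline{S_2(E)}|>0$; a standard density-point argument using continuity of $\Phi$ on $E^3\setminus\{y=z\}$ upgrades this to $|S_2(E)|>0$.

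The main obstacle is the bookkeeping in the middle step: one must track the Jacobians through the chain of substitutions $(s_1,s_2,r,r')\mapsto(t,r,\rho,\tau)$, rewrite the polar spherical integrals via Haar measure on $O(d)$, and verify that the three absorbed rotations precisely cancel (leaving one rotation $\theta$ inside the $L^2$) while one of the two dilation integrals becomes the outer $d\rho$. A secondary technical point is that $\rho$ runs over the noncompact group $\R^+$, but since $E$ is bounded the integrand is compactly supported in $\rho$ (by the diameter of $E$ divided by a lower scale set by $\epsilon$), so convergence of the $\rho$-integral is automatic for each fixed $\epsilon>0$.
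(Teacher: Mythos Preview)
Your approach is correct and follows the same underlying idea as the paper: parametrize the level set $\{|a|=t|b|\}$ by the similarity group $\R^+\times O(d)\times O(d)$, write two expressions for $\tilde\nu^\epsilon(t)$, multiply, and collapse via Haar invariance. The paper carries this out more abstractly via the coarea formula (exactly as in Theorem~\ref{Matforchain}, noting only that the enlarged group introduces the extra $dr$), whereas you do the explicit polar-coordinate and substitution bookkeeping; the content is the same.

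One small caveat: after your chain of substitutions the leftover Jacobian factors (powers of $t$, $r$, $\rho$) do not cancel to a \emph{constant}---you end up with bounded positive weights such as $\rho^{2d-1}$ and a mismatch between the $r$- and $\tau$-powers. This is harmless because the statement is $\approx$ and all variables range over a compact subset of $(0,\infty)$ on the support (this is exactly the role of the function $\psi\approx 1$ in the paper's argument), but your phrase ``up to a multiplicative constant depending only on $d$ and $\mathrm{diam}(E)$'' should read ``up to a bounded positive weight.'' Your second half (weak-$*$ compactness plus identification of the limit) is the standard reduction the paper also invokes.
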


\subsection{Weighted spherical averaging operators}
Let $\mu$ be a measure on $E\subset\R^d$ and $\omega_t$ be the normalized surface measure on $tS^{d-1}$, then the most natural way to define a measure $\nu$ on its distance set is, roughly speaking,
$$\nu(t)=\int\omega_t(x-y)\,d\mu(x)\,d\mu(y) = <\omega_t*\mu, 1>_\mu.  $$
Then it is natural to look at the spherical averaging operator
$$T_t f (x)= \omega_t * (f\mu)(x).$$

The key in Bennett-Iosevich-Taylor's proof on $k$-chains (\cite{BIT16}) is, if $\mu$ is a probability measure satisfying
$$\mu(B(x,r))\lesssim r^{s_\mu}, \ \forall\,x\in\R^d, \ \forall\,r>0,$$ then for each $t>0$, 
\begin{equation}\label{BITsL2}||T_t f||_{L^2(\mu)}\lesssim ||f||_{L^2(\mu)},\ \text{if}\ s_\mu>\frac{d+1}{2},\end{equation}
or equivalently,
\begin{equation}\label{BITL2}|<T_t f, g>_\mu|\lesssim ||f||_{L^2(\mu)}||g||_{L^2(\mu)},\ \text{if}\ s_\mu>\frac{d+1}{2}.\end{equation}

In fact, we can improve the dimensional exponent in \eqref{BITL2} by taking average in $t$. Define a measure $\nu$ on the distance set by
$$\nu(t)= \int\omega_t(x-y)\,f(x)d\mu(x)\,g(y)d\mu(y)= <T_t f, g>_\mu. $$
Then derivations of Mattila integrals and Lemma \ref{lemma1}, \ref{lemma3} imply
$$||\nu||_{L^2}^2\approx \int {\left( \int_{S^{d-1}} |\widehat{f\mu}(r \omega)|\,\overline{|\widehat{g\mu}(r \omega)|} d\omega \right)}^2 r^{d-1} dr\lesssim ||f||^2_{L^2(\mu)}||g||^2_{L^2(\mu)},\ \text{if}\ s_\mu>\frac{d}{2}+\frac{1}{3}.  $$
In other words,
\begin{equation}\label{averBITL2}\int|<T_t f, g>_\mu|^2\,dt\lesssim ||f||^2_{L^2(\mu)}||g||^2_{L^2(\mu)},\ \text{if}\ s_\mu>\frac{d}{2}+\frac{1}{3}.\end{equation}

In this paper, we study $2$-chains and similar $2$-chains, where measures can be defined as 
\begin{equation}\label{nu-2chain}\begin{aligned}\nu(t_1,t_2)=&\int_{\{|x-y|=t_1, |z-y|=t_2\}}\,d\mu(x)\,d\mu(y)\,d\mu(z) \\=&\int\omega_{t_1}(x-y)\,\omega_{t_2}(z-y)\,d\mu(x)\,d\mu(y)\,d\mu(z) \\= &<T_{t_1}\circ T_{t_2} 1,1 >_\mu\end{aligned}\end{equation}
and
\begin{equation}\label{nu-similar2chain}\tilde{\nu}(t)=\iint\omega_{rt}(x-y)\,\omega_{r}(z-y)\,d\mu(x)\,d\mu(y)\,d\mu(z)\,dr = \int <T_{rt}\circ T_{r} 1,1 >_\mu \,dr,\end{equation}
respectively. Then it is natural to look at $<T_{t_1}\circ T_{t_2}f, g>_\mu$ and $\int <T_{rt}\circ T_{r}f, g>_\mu dr $.

\begin{theorem}
	\label{w-ineq}
Suppose $\mu$ is a probability measure on $\R^d$ such that  
$$\mu(B(x,r))\lesssim r^{s_\mu}, \ \forall\,x\in\R^d, \ \forall\,r>0.$$ Denote $\omega_t$ as the normalized surface measure on $tS^{d-1}$ and define $T_t$ as
$$T_t f (x)= \omega_t * (f\mu)(x).$$ 
Then
\begin{equation}\label{w-ineq1}\iint |<T_{t_1}\circ T_{t_2}f, g>_\mu|^2\,dt_1\,dt_2\lesssim||f||^2_{L^2(\mu)}||g||^2_{L^2(\mu)},\ \text{if}\ s_\mu>\frac{d}{2}+\frac{1}{3}\end{equation}
and
\begin{equation}\label{w-ineq2}\int \left|\int <T_{rt}\circ T_{r}f, g>_\mu\,dr\right|^2\,dt\lesssim||f||^2_{L^2(\mu)}||g||^2_{L^2(\mu)},\ \text{if}\ s_\mu>\frac{d}{2}+\frac{1}{7}.\end{equation}
\end{theorem}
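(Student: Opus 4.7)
The plan for Theorem \ref{w-ineq} is to leverage the Mattila-integral machinery together with the Wolff-Erdogan spherical average estimate, with a different argument for each part.

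\emph{Part 1: \eqref{w-ineq1}.} Since $\omega_t$ is an even measure, $T_t$ is self-adjoint on $L^2(\mu)$, so
\[
\langle T_{t_1}\circ T_{t_2}f, g\rangle_\mu = \langle T_{t_2}f, T_{t_1}g\rangle_\mu.
\]
Applying the Cauchy-Schwarz inequality in $L^2(\mu)$ and Fubini yields
\[
\iint|\langle T_{t_1}\circ T_{t_2}f, g\rangle_\mu|^2\,dt_1\,dt_2 \leq \left(\int\|T_tf\|^2_{L^2(\mu)}\,dt\right)\left(\int\|T_tg\|^2_{L^2(\mu)}\,dt\right).
\]
Thus \eqref{w-ineq1} reduces to the one-variable averaged bound $\int\|T_tf\|^2_{L^2(\mu)}\,dt \lesssim \|f\|^2_{L^2(\mu)}$ when $s_\mu > d/2 + 1/3$. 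To prove this, I would expand $\|T_tf\|^2_{L^2(\mu)}$ in Fourier (using $\widehat{\omega_t}(\xi) = \hat\sigma(t|\xi|)$) and integrate over $t$. The stationary-phase asymptotic of $\hat\sigma$ concentrates $\int\hat\sigma(t|\xi|)\hat\sigma(t|\eta|)\,dt$ near $|\xi|=|\eta|$ with weight $(|\xi||\eta|)^{-(d-1)/2}$, yielding a Mattila-style spherical integral. Cauchy-Schwarz in $\omega$, Wolff-Erdogan's decay $\int_{S^{d-1}}|\widehat{f\mu}(r\omega)|^2\,d\omega\lesssim r^{-(d+2s_\mu-2)/4+\epsilon}$, and the weighted energy bound $\int|\widehat{f\mu}(\xi)|^2|\xi|^{-d+s_\mu}\,d\xi\lesssim \|f\|^2_{L^2(\mu)}$ (a consequence of Lemma \ref{lemma1}, \ref{lemma3}) then give the threshold $d/2 + 1/3$ exactly as in the classical Wolff-Erdogan argument.

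\emph{Part 2: \eqref{w-ineq2}.} Here the direct Cauchy-Schwarz approach used in Part 1 is too crude: after the change of variables $s = rt$, it produces a divergent $\int dt/t$, or at best, once $t$ is restricted to a bounded range, only recovers the threshold $d/2 + 1/3$. Instead, I would derive the weighted analogue of the Mattila representation of Theorem \ref{Matforsimilarchain}, replacing $\mu^\epsilon$ in the first and third positions by $(f\mu)^\epsilon$ and $(g\mu)^\epsilon$, yielding
\[
\int\left|\int\langle T_{rt}\circ T_rf, g\rangle_\mu\,dr\right|^2 dt \approx \int\iint\left|\iint(f\mu)^\epsilon(u+x)(g\mu)^\epsilon(u'+r\theta x)\,dx\,d\theta\right|^2 d\mu(u)\,d\mu(u')\,dr.
\]
Applying Plancherel in $x$ converts the inner integral into an oscillatory expression coupling $\widehat{f\mu}$ and $\widehat{g\mu}$ through the rotation $\theta\in O(d)$ and the scale $r$. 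The $\theta$-integration triggers a Wolff-Erdogan-type spherical-average decay, and the additional integration over the scale $r$ (specific to the similarity problem) provides a further $L^2$-Plancherel gain in the scaling variable. These two averagings together bring the threshold down from $d/2 + 1/3$ to $d/2 + 1/7$.

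The main obstacle is the second part: one must carry out the Fourier-side analysis carefully enough to identify the precise saving from the $r$-averaging layered on top of Wolff-Erdogan, and simultaneously handle the weights $f, g\in L^2(\mu)$ (which are not bounded, only square-integrable with respect to $\mu$) through the Mattila-integral reformulation and the weighted energy estimate provided by Lemma \ref{lemma1}, \ref{lemma3}.
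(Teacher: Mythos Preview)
For \eqref{w-ineq2} your plan coincides with the paper's: pass to the weighted Mattila-type integral of Theorem \ref{Matforsimilarchain}, apply Plancherel in $x$, decompose dyadically, and observe that the extra $dr$-integration upgrades one spherical estimate (Lemma \ref{lemma3}) to a full annulus estimate (Lemma \ref{lemma1}), producing the exponent $\frac{7d-14s_\mu+2}{4}$ and hence the threshold $d/2+1/7$.

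For \eqref{w-ineq1} your route is genuinely different from the paper's, and there is a real gap in the sketch. After your Cauchy-Schwarz in $L^2(\mu)$ you must bound $\int\|T_tf\|_{L^2(\mu)}^2\,dt$. Expanding and integrating the middle $d\mu(u)$ gives
\[
\iint K(|\xi|,|\eta|)\,\widehat{f\mu}(\xi)\,\overline{\widehat{f\mu}(\eta)}\,\hat\mu(\eta-\xi)\,d\xi\,d\eta,\qquad K(r,s)=\int\hat\sigma(tr)\hat\sigma(ts)\,dt,
\]
and after the stationary-phase localization to $|\xi|\approx|\eta|=r$ the inner piece is
\[
\iint_{S^{d-1}\times S^{d-1}}\widehat{f\mu}(r\omega)\,\overline{\widehat{f\mu}(r\omega')}\,\hat\mu\bigl(r(\omega'-\omega)\bigr)\,d\omega\,d\omega'.
\]
Because of the factor $\hat\mu(r(\omega'-\omega))$ this is \emph{not} the classical Mattila integrand $(\int_{S^{d-1}}|\widehat{f\mu}(r\omega)|^2\,d\omega)^2$, and ``Cauchy-Schwarz in $\omega$ plus Wolff--Erdogan plus energy, as in the classical argument'' does not close: any direct Cauchy--Schwarz separating $\omega$ from $\omega'$ leaves a spherical average of $|\hat\mu|^2$ over spheres \emph{not} centered at the origin, to which Lemma \ref{lemma3} does not apply. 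The missing step is to recognize that the double integral above equals $\int\bigl|\widehat{\widehat{f\mu}\,d\omega_r}(u)\bigr|^2\,d\mu(u)$ and then to invoke Lemma \ref{lemma4} (the dual form of Wolff--Erdogan), followed by Lemma \ref{lemma1}. With that correction your route does reach $d/2+1/3$; in fact the bound $\int\|T_tf\|_{L^2(\mu)}^2\,dt\lesssim\|f\|_{L^2(\mu)}^2$ is strictly stronger than \eqref{w-ineq1} and amounts to an averaged pinned-distance estimate.

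The paper avoids this detour entirely. It first passes through the group-action representation of Theorem \ref{Matforchain}, applies Cauchy--Schwarz in the $(u,u')$ variables (not in $L^2(\mu)$ at fixed $t_1,t_2$), and lands on a quadrilinear object $\iint|A_f(u,u')|^2\,d\mu(u)\,d\mu(u')$ to which Lemmas \ref{lemma2}, \ref{lemma4}, \ref{lemma3}, \ref{lemma1} apply in sequence. Your route, once repaired, buys a stronger conclusion; the paper's route is more self-contained and makes the role of the two $d\mu$-integrations symmetric from the outset.
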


By Frostman Lemma (see Section 3), together with \eqref{nu-2chain}, \eqref{nu-similar2chain}, one can see that Theorem \ref{w-ineq} implies Theorem \ref{main}, \ref{main2}.
\vskip.125in
{\bf Notations.} Throughout this paper, $X\lesssim Y$ means $X\leq CY$ for some constant $C>0$. $X\lesssim_\epsilon Y$ means $X\leq C_\epsilon Y$ for some constant $C_\epsilon>0$, depending on $\epsilon$.
\vskip.125in
{\bf Organization.} This paper is organized as follows. In section $2$ we set up the integrals in Theorem \ref{Matforchain}, \ref{Matforsimilarchain}. In section $3$ we prove some lemmas that are useful in estimating these integrals. In section $4$ we prove Theorem \ref{w-ineq}, that implies Theorem \ref{main}, \ref{main2}.
\vskip.125in
{\bf Acknowledgements.} This work was done when the author was visiting the Hong Kong University of Science and Technology. The author would like to thank Professor Yang Wang for the financial support.

\section{Proof of Theorem \ref{Matforchain} and Theorem \ref{Matforsimilarchain}}
Denote $d\mu_f = f\,d\mu$. More generally we will consider 
\begin{equation*}\begin{aligned}\int_{\R^2} F(\vec{t})\,d\nu_{f,g}^\epsilon(\vec{t}) &= \iiint F((|x-y|, |y-z|))\,\mu_f^\epsilon(x)\,\mu_g^\epsilon(z)\,dx\,d\mu(y)\,dz,\\
\int_{\R} F(t)\,d\tilde{\nu}_{f,g}^\epsilon(t) &= \iiint F(\frac{|x-y|}{|y-z|})\,\mu_f^\epsilon(x)\,\mu_g^\epsilon(z)\,dx\,d\mu(y)\,dz,\end{aligned}\end{equation*}
and show
\begin{multline}\label{nu^2fg}
\int_{\R^2} |\nu_{f,g}^\epsilon(\vec{t})|^2\,d\vec{t}\\\approx\iint \left(\int_{O(d)}\int \mu_f^\epsilon(u+x)\mu_f^\epsilon(u'+\theta x)\,dx\,d\theta \right)\left(\int_{O(d)}\int \mu_g^\epsilon(u+x)\mu_g^\epsilon(u'+\theta x)\,dx\,d\theta \right) d\mu(u)\,d\mu(u'),\end{multline}
\begin{multline}
\int_{\R} |\tilde{\nu}_{f,g}^\epsilon(t)|^2\,dt\\\approx\iiint \left(\iint \mu_f^\epsilon(u+x)\mu_f^\epsilon(u'+r\theta x)\,dx\,d\theta \right)\left(\iint \mu_g^\epsilon(u+x)\mu_g^\epsilon(u'+r\theta x)\,dx\,d\theta \right) d\mu(u)\,d\mu(u')\,dr.\end{multline}

\subsection{Idea of the proof}
We first sketch the idea of the proof. Rigorous proof comes later in this section. 

For $2$-chians, i.e., $\nu_{f,g}(\vec{t})$, roughly speaking,
$$\nu_{f,g}(\vec{t})=\int_{\{(|x-y|,|z-y|)=\vec{t}\}}\,d\mu_f(x)\,d\mu(y)\,d\mu_g(z). $$
Since 
$$\{(u+x,u,u+z): u\in\R^d, (|x|, |z|)=\vec{t}\}=\{(u+\theta_1 x_{\vec{t}}, u, u+\theta_2 z_{\vec{t}}) : u\in\R^d, \theta_1, \theta_2\in O(d) \}, $$
where $(x_{\vec{t}}, z_{\vec{t}})$ is any fixed point such that $(|x_{\vec{t}}|, |z_{\vec{t}}|)=\vec{t}$, we have two expressions of $\nu_{f,g}$ on $\Delta_2(E)$,
$$\nu_{f,g}(\vec{t}) \approx \int_E\int_{\{(|x|,|z|)=\vec{t}\}} \mu_f(u+x) \mu_g(u+z)\,d\mathcal{H}^{2d-2}(x,z)\,d\mu(u)$$
and
$$\nu_{f,g}(\vec{t}) \approx \int_E\int_{O(d)}\int_{O(d)} \mu_f(u'+\theta_1 x_{\vec{t}}) \mu_g(u'+\theta_2 z_{\vec{t}})\,d\theta_1\,d\theta_2\,d\mu(u').$$
Multiplying these two expressions and integrate it in $\vec{t}$, it follows that the square of the $L^2$-norm of $\nu_{f,g}$ approximately equals
$$ \iiint\!\!\!\!\int\limits_{\{(|x|,|z|)=\vec{t}\}}\!\!\!\!\! \mu_f(u+x) \mu_g(u+z)\left(\iint\mu_f(u'+\theta_1 x_{\vec{t}})\, \mu_g(u'+\theta_2 z_{\vec{t}})\,d\theta_1\,d\theta_2\right)\!d\mathcal{H}^{2d-2}(x,z)\,d\vec{t}\,d\mu(u)\,d\mu(u'). $$
By the invariance of $d\theta$, on the surface $\{(|x|,|z|)=\vec{t}\}$, we may replace $x_{\vec{t}}$ by $x$, $z_{\vec{t}}$ by $z$. Also $$d\mathcal{H}^{2d-2}|_{\{(|x|,|z|)=\vec{t}\}}(x,z)\, d\vec{t}\approx dx\,dz.$$
Therefore the integral above approximately equals
\begin{equation*}
\begin{aligned}&\iiiint\mu_f(u+x) \mu_g(u+z)\left(\iint\mu_f(u'+\theta_1 x) \mu_g(u'+\theta_2z)\,d\theta_1\,d\theta_2\right)dx\,dz\,d\mu(u)\,d\mu(u')\\=&
\iint \left(\int_{O(d)}\int \mu_f(u+x)\mu_f(u'+\theta x)\,dx\,d\theta \right)\left(\int_{O(d)}\int \mu_g(u+x)\mu_g(u'+\theta x)\,dx\,d\theta \right) d\mu(u)\,d\mu(u'),
\end{aligned}
\end{equation*}
as desired.
\vskip.125in
It is quite similar in the case of similar $2$-chains. Notice
$$\{(u+x,u,u+z): u\in\R^d, |x|=t|z|\}=\{(u+r\theta_1 x_{t}, u, u+r\theta_2 z_{t}) : u\in\R^d, r\in\R, \theta_1, \theta_2\in O(d) \}, $$
where $(x_{t}, z_{t})$ is any non-zero fixed point such that $|x_{t}|=t|z_{t}|$. Then we have two ways to express a measure $\tilde{\nu}_{f,g}$ on $S_2(E)$,
$$\tilde{\nu}_{f,g}(t) \approx \int_E\int_{\{|x|=t|z|)\}} \mu_f(u+x) \mu_g(u+z)\,d\mathcal{H}^{2d-1}(x,z)\,d\mu(u),$$
$$\tilde{\nu}_{f,g}(t) \approx \int_E\int_{O(d)}\int_{O(d)} \mu_f(u'+r\theta_1 x_{t}) \mu_g(u'+r\theta_2 z_{t})\,d\theta_1\,d\theta_2\,dr\,d\mu(u'),$$
and all steps above still work.

\subsection{Rigorous proof}For a rigorous proof, we need the coarea formula. For smooth cases the coarea formula follows from a simple change of variables. More general forms of the formula for Lipschitz functions were first established by Federer in 1959 and later generalized by different authors. For references, one can see \cite{Fed69}. We will use the following version in this paper.

\begin{theorem}
	[Coarea formula, 1960s]
Let $\Phi$ be a Lipschitz function defined in a domain $\Omega\subset\R^{d(k+1)}$, taking on values in $\R^m$ where $m<d(k+1)$. Then for any $f\in L^1(\R^{d(k+1)})$,
$$\int_{\Omega} F(x)|J_m\Phi(x)|\,dx=\int_{\R^{m}}\left(\int_{\Phi^{-1}(\vec{t})} F(x)\,d\mathcal{H}^{d(k+1)-m}(x)\right)\,d\vec{t},  $$
where $J_m\Phi$ is the $m$-dimensional Jacobian of $\Phi$ and $\mathcal{H}^{d(k+1)-m}$ is the $(d(k+1)-m)$-dimensional Hausdorff measure.
\end{theorem}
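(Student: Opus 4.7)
The coarea formula is Federer's classical result, so rather than attempting a self-contained proof I would follow the standard two-stage approach: first prove the identity for smooth maps using the implicit function theorem, then extend to Lipschitz maps via Rademacher's theorem and approximation. Write $n = d(k+1)$ throughout.

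First, by linearity and monotone convergence it suffices to take $F = \chi_A$ for a Borel set $A \subset \Omega$, so the goal reduces to
$$\int_A |J_m \Phi(x)|\,dx = \int_{\R^m} \mathcal{H}^{n-m}(A \cap \Phi^{-1}(\vec{t}))\,d\vec{t}.$$
Having reduced to this geometric form, a density/approximation argument lets me assume $A$ is open or even a cube.

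For $\Phi \in C^1$ I would split $\Omega$ into the regular set $R = \{J_m\Phi > 0\}$ and the critical set $C = \{J_m\Phi = 0\}$. On $R$, the implicit function theorem produces local coordinates in which $\Phi$ becomes the projection onto the first $m$ coordinates; assembling these via a smooth partition of unity, the identity reduces to Fubini's theorem combined with the standard change of variables, with $|J_m\Phi|$ appearing as the Jacobian factor. On $C$, the left-hand side vanishes by definition, while the right-hand side vanishes by a Sard-type argument: the set of critical values has Lebesgue measure zero, and for a.e.\ $\vec{t}$ the slice $\Phi^{-1}(\vec{t}) \cap C$ has $\mathcal{H}^{n-m}$-measure zero.

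To extend to Lipschitz $\Phi$, I would invoke Rademacher's theorem so that $D\Phi$, and hence $J_m\Phi$, is defined almost everywhere. By Whitney's extension theorem, for every $\eta > 0$ there exists a $C^1$ map $\Phi_\eta : \R^n \to \R^m$ that agrees with $\Phi$ outside a set $E_\eta$ of Lebesgue measure less than $\eta$, with $D\Phi_\eta = D\Phi$ on $\Omega \setminus E_\eta$. Applying the smooth case to $\Phi_\eta$ and letting $\eta \to 0$ produces the formula for $\Phi$, provided one knows that Lipschitz maps send $\mathcal{H}^{n-m}$-null sets to $\mathcal{H}^{n-m}$-null sets (so the contribution of $E_\eta$ to both sides is controlled uniformly in $\eta$).

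The main obstacle is precisely this Lipschitz-to-$C^1$ approximation: one must simultaneously control the Lebesgue integral of $|J_m \Phi|$ on the left and an $\mathcal{H}^{n-m}$-integral over level sets on the right, using only that $\Phi_\eta$ and $\Phi$ agree off a small-measure set. Federer's original strategy handles this by decomposing $\Omega$ into countably many Borel pieces on each of which $\Phi$ is bi-Lipschitz onto its image after a suitable linear modification, which makes both sides computable piecewise and legitimizes the limit. Once this step is in place, the two claimed identities follow by direct substitution, and in our application we will only need the smooth case anyway, since the map $\Phi(x,y,z) = (|x-y|, |y-z|)$ is smooth away from the coincidence set $\{x=y\} \cup \{y=z\}$, which we can excise without affecting the final estimates.
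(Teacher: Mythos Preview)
The paper does not prove this theorem at all: it states the coarea formula as a classical result and refers the reader to Federer \cite{Fed69} (``For references, one can see \cite{Fed69}''). So there is no proof in the paper to compare against. Your sketch is a reasonable outline of the standard proof (reduce to characteristic functions, handle the $C^1$ case via the implicit function theorem and Sard, then pass to Lipschitz via Rademacher and Whitney--Federer approximation), and your closing remark is apt: in this paper the formula is only applied to the map $\Phi_y(x,z)=(|x-y|,|y-z|)$, which is smooth away from a null set, so the full Lipschitz version is not really needed. But since the paper simply quotes the result, no proof was expected here.
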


We only prove Theorem \ref{Matforchain}. Then Theorem \ref{Matforsimilarchain} follows in a very similar way. Denote $\Phi_y(x,z)=(|x-y|,|z-y|)$. Fix $y$ and apply the the coarea formula on $dx\,dz$, \eqref{measureforchain} can be written as
\begin{equation*} 
\int_{\R^2} F(\vec{t})\, d\nu_{f,g}^\epsilon(\vec{t})=\int F(\vec{t}) \left(\int \int_{\Phi_y^{-1}(\vec{t})} \mu_f^\epsilon(x)\,\mu_g^\epsilon(z)\,\frac{1}{|J_2\Phi_y(x,z)|}\,d\mathcal{H}^{2d-2}(x,z)\,d\mu(y)\right)\,d\vec{t}.
\end{equation*}
It follows that 
\begin{equation}\label{nu1}
\nu_{f,g}^\epsilon(\vec{t})=\int_E\int_{\Phi_y^{-1}(\vec{t})} \,\mu_f^\epsilon(x)\,\mu_g^\epsilon(z)\,\frac{1}{|J_2\Phi_y|}\,d\mathcal{H}^{2d-2}(x,z)\,d\mu(y).
\end{equation}
On the other hand, the probability Haar measure $d\theta$ on $O(d)$ induces a measure $d\sigma^y_{\vec{t}}\,d\mu$ on $\Phi_{y}^{-1}(\vec{t})\cap\{y\in E\}$ by
$$\int_E\int_{\Phi_{y}^{-1}(\vec{t})} F(x,z)\, d\sigma^y_{\vec{t}}(x,z)\,d\mu(y) = \int_E\int_{O(d)}\int_{O(d)} F(\theta_1 (x^{y}_{\vec{t}}-y)+y, \theta_2 (z^{y}_{\vec{t}}-y)+y)\, d\theta_1\,d\theta_2\,d\mu(y), $$
where $(x^y_{\vec{t}}, z^y_{\vec{t}})$ is any fixed point such that $(|x^{y}_{\vec{t}}-y|, |z^{y}_{\vec{t}}-y|)=\vec{t}$. By the invariance of the Haar measure, with $y$ fixed, $\sigma^y_{\vec{t}}$ does not depedent on the choice of $(x^y_{\vec{t}}, z^y_{\vec{t}})$, 
and it must be absolutely continuous with respect to 
$\mathcal{H}^{2d-2}|_{\Phi_y^{-1}(\vec{t})}$. In fact, by the invariance, there exists a positive  function $\psi$ on $\Phi_y^{-1}(\vec{t})$ such that
$$\sigma^y_{\vec{t}}=\psi\, \mathcal{H}^{2d-2}|_{\Phi_y^{-1}(\vec{t})}. $$

On any compact set, $\psi\approx 1$, so another expression of $\nu_{f,g}^\e$ follows,
\begin{equation}\label{nu2}
\begin{aligned}
\nu_{f,g}^\epsilon(\vec{t})=&\int_E\int_{\Phi_y^{-1}(\vec{t})} \,\mu_f^\epsilon(x)\,\mu_g^\epsilon(z)\,\frac{1}{|J_2\Phi_y|}\,d\mathcal{H}^{2d-2}(x,z)\,d\mu(y)\\\approx& \int_E\int_{\Phi_y^{-1}(\vec{t})} \,\mu_f^\epsilon(x)\,\mu_g^\epsilon(z)\,d\sigma^y_{\vec{t}}(x,z)\,d\mu(y)\\=&\int_E\int_{O(d)}\int_{O(d)} \mu_f^\epsilon(\theta_1 (x^{y}_{\vec{t}}-y)+y)\,\mu_g^\epsilon (\theta_2 (z^{y}_{\vec{t}}-y)+y)\, d\theta_1\,d\theta_2\,d\mu(y)
\end{aligned}
\end{equation}
\vskip.125in

For convenience, we change $y$ in \eqref{nu1} by $u$ and $y$ in \eqref{nu2} by $u'$, then $\int |\nu^\e|^2$ is approximately
\begin{multline*}
		\iiint\int_{\Phi_u^{-1}(\vec{t})} \,\mu_f^\epsilon(x)\,\mu_g^\epsilon(z)\left(\int_{O(d)}\int_{O(d)} \mu_f^\epsilon(\theta_1 (x^{u'}_{\vec{t}}-u')+u')\,\mu_g^\epsilon (\theta_2 (z^{u'}_{\vec{t}}-u')+u')\, d\theta_1\,d\theta_2\right)\\\frac{1}{|J_2\Phi_u|}\,d\mathcal{H}^{2d-2}(x,z)\,d\vec{t}\,d\mu(u)\,d\mu(u').
\end{multline*}

For any pair $(x,z)\in \Phi_u^{-1}(\vec{t})$, by definition $(|x-u|, |z-u|)=\vec{t}$ and therefore $(|(x-u+u')-u'|, |(z-u+u')-u'|)=\vec{t}$. By the invariance of the Haar measure we may replace $x^{u'}_{\vec{t}}$ by $x-u+u'$ and $z^{u'}_{\vec{t}}$ by $z-u+u'$,
\begin{multline*}
\int_E\int_E\int_{\R^2}\int_{\Phi_u^{-1}(\vec{t})}\mu_f^\epsilon(x)\mu_g^\epsilon(z)\left(\int_{O(d)}\int_{O(d)}\mu_f^\epsilon(\theta_1 (x-u)+u')\mu_g^\epsilon (\theta_2 (z-u)+u')d\theta_1 d\theta_2\right)\\\frac{1}{|J_2\Phi_u|}d\mathcal{H}^{2d-2}(x,z)\,d\vec{t}\,d\mu(u)d\mu(u').
\end{multline*}
By the coarea formula, $\frac{1}{|J_2\Phi_u|}d\mathcal{H}^{2d-2}|_{\Phi_u^{-1}(\vec{t})}(x,z)\,d\vec{t} = dx\,dz$, then it equals
\begin{equation*}
\begin{aligned}
&\iiiint\mu_f^\epsilon(x)\mu_g^\epsilon(z)\!\left(\int_{O(d)}\int_{O(d)} \mu_f^\epsilon(\theta_1 (x-u)+u')\mu_g^\epsilon (\theta_2 (z-u)+u')d\theta_1\,d\theta_2\right)\!dx\,dz\,d\mu(u)\,d\mu(u')\\=&\iint\iint\mu_f^\epsilon(x+u)\mu_g^\epsilon(z+u)\left(\int_{O(d)} \mu_f^\epsilon(\theta x+u')\mu_g^\epsilon (\theta z+u')d\theta\right)dx\,dz\,d\mu(u)\,d\mu(u')\\=&\iint \left(\int_{O(d)}\int \mu_f^\epsilon(x+u)\mu_f^\epsilon(\theta x+u')\,dx\,d\theta \right)\left(\int_{O(d)}\int \mu_g^\epsilon(x+u)\mu_g^\epsilon(\theta x+u')\,dx\,d\theta \right) d\mu(u)\,d\mu(u'),
\end{aligned}
\end{equation*}
as desired.

\section{Restriction-type lemmas}
We need natural measures on $E$ illustrating its Hausdorff dimension.

\begin{Froslemma}
	[see, e.g. \cite{Mat95}]
	Suppose $E\subset\R^d$ and denote $\mathcal{H}^s$ as the $s$-dimensional Hausdorff measure. Then $\mathcal{H}^s(E)>0$ if and only if there exists a probability measure $\mu$ on $E$ such that 
$$\mu(B(x,r))\lesssim r^s $$
	for any $x\in\R^d$, $r>0$.
\end{Froslemma}

Since by definition $\dH(E)=\sup\{s:\mathcal{H}^s(E)>0\}$, Frostman Lemma implies that for any $s_\mu<\dH(E)$ there exists a probability measure $\mu_E$ on $E$ such that
\begin{equation}\label{Frostmanmeasure}\mu_E(B(x,r))\lesssim r^{s_\mu},\ \forall\ x\in\R^d,\ r>0.  
\end{equation}

We need the following restriction-type lemmas on measures satisfying \eqref{Frostmanmeasure}.
\begin{lemma}\label{lemma1}
Suppose $\mu$ satisfies \eqref{Frostmanmeasure}. Then
	$$\int_{|\xi|\leq R}|\widehat{f\, d\mu}(\xi)|^2\,d\xi\lesssim R^{d-s_\mu}||f||_{L^2(\mu)}^2. $$
\end{lemma}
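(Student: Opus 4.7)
This is a standard Frostman-type restriction estimate, and the plan is to prove it by inserting a smooth frequency cutoff, passing to the physical side, and then controlling the resulting convolution with the Frostman condition via a dyadic decomposition.

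\textbf{Step 1 (smooth cutoff).} Fix a Schwartz function $\chi$ with $\chi \geq 1$ on the unit ball and set $\chi_R(\xi) = \chi(\xi/R)$, so that $\chi_R \geq \mathbf{1}_{\{|\xi|\leq R\}}$. Then
\[
\int_{|\xi|\leq R}|\widehat{f\,d\mu}(\xi)|^2\,d\xi \;\leq\; \int |\widehat{f\,d\mu}(\xi)|^2 \chi_R(\xi)\,d\xi.
\]

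\textbf{Step 2 (move to the physical side).} Expanding $|\widehat{f\,d\mu}|^2$ and applying Fubini, the right-hand side becomes
\[
\iint f(x)\,\overline{f(y)}\; R^d\check\chi\bigl(R(x-y)\bigr)\,d\mu(x)\,d\mu(y),
\]
where $\check\chi$ is also Schwartz. By Cauchy--Schwarz and symmetry in $x,y$, this is bounded by
\[
R^d \int |f(x)|^2 \left(\int \bigl|\check\chi(R(x-y))\bigr|\,d\mu(y)\right) d\mu(x).
\]

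\textbf{Step 3 (the Frostman estimate, where the main work lies).} The heart of the argument is to show
\[
R^d \int \bigl|\check\chi(R(x-y))\bigr|\,d\mu(y) \;\lesssim\; R^{d-s_\mu}, \qquad \text{uniformly in } x\in\mathbb R^d.
\]
I would do this with a dyadic decomposition: for $k\geq 0$, on the shell $\{2^{k-1}/R \leq |x-y| < 2^k/R\}$ (and the ball $\{|x-y|<1/R\}$ for $k=0$), the Schwartz decay $|\check\chi(z)|\lesssim (1+|z|)^{-N}$ gives the pointwise bound $\lesssim 2^{-kN}$, while the Frostman condition \eqref{Frostmanmeasure} gives mass $\lesssim (2^k/R)^{s_\mu}$. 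Summing
\[
R^d \sum_{k\geq 0} 2^{-kN}\Bigl(\tfrac{2^k}{R}\Bigr)^{s_\mu} = R^{d-s_\mu} \sum_{k\geq 0} 2^{k(s_\mu - N)}
\]
converges once $N$ is chosen larger than $s_\mu$, which yields the claim.

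\textbf{Step 4 (conclude).} Plugging this pointwise bound back into Step~2 gives
\[
\int_{|\xi|\leq R}|\widehat{f\,d\mu}(\xi)|^2\,d\xi \;\lesssim\; R^{d-s_\mu}\int|f(x)|^2\,d\mu(x) \;=\; R^{d-s_\mu}\|f\|_{L^2(\mu)}^2,
\]
as required. The only non-cosmetic step is Step~3; everything else is bookkeeping. No sharpness issues arise, because we are free to take $N$ as large as we wish thanks to $\chi$ being Schwartz.
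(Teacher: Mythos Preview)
Your proof is correct and follows essentially the same approach as the paper. The only cosmetic difference is that the paper chooses the bump function $\psi$ to be compactly supported on the physical side (with $\widehat\psi$ positive on the unit ball), so that the kernel bound in your Step~3 is an immediate consequence of the Frostman condition without any dyadic decomposition; the paper then concludes via Schur's test, which is equivalent to your Cauchy--Schwarz-plus-symmetry argument.
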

\begin{proof}
Take $\psi\subset C_0^\infty(\R^d)$ whose Fourier transform is positive in the unit ball. Then
\begin{equation*}
\begin{aligned}
	\int_{|\xi|\leq R}|\widehat{f\,d\mu}(\xi)|^2\,d\xi&\lesssim \int|\widehat{f\,d\mu}(\xi)|^2\,\widehat{\psi}(\frac{\xi}{R})\,d\xi\\& = R^d\iint \psi(R(x-y))\,f(x)f(y)d\mu(x)\,d\mu(y).
\end{aligned}	
\end{equation*}
Since $\psi$ has bounded support and $\mu$ satisfies \eqref{Frostmanmeasure},
$$\int|\psi(R(x-y))|\,d\mu(x)\lesssim R^{-s_\mu},\ \int|\psi(R(x-y))|\,d\mu(y)\lesssim R^{-s_\mu}.$$
Then the lemma follows by Shur's test.
\end{proof}

\begin{lemma}
	\label{lemma2}
Suppose $f$ is supported on $\{\xi\in\R^d: |\xi|\leq R\}$ and $\mu$ satisfies \eqref{Frostmanmeasure}, then
$$\int|\hat{f}|^2\, d\mu\lesssim R^{d-s_\mu}||f||_{L^2(\R^d)}^2. $$
\end{lemma}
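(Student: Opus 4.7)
The statement is precisely the $TT^*$ dual of Lemma \ref{lemma1}, so the plan is to deduce it by a short duality argument rather than repeating the Schur-test calculation. Think of the operator $T:L^2(\mu)\to L^2(B_R)$ defined by $T\phi=\widehat{\phi\,d\mu}\cdot\chi_{B(0,R)}$; Lemma \ref{lemma1} says exactly that $\|T\|^2\lesssim R^{d-s_\mu}$. Its adjoint $T^*:L^2(B_R)\to L^2(\mu)$ is Fourier inversion followed by restriction to $\mathrm{supp}\,\mu$, and $\|T^*\|=\|T\|$ is the content of Lemma \ref{lemma2}.

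Concretely I would test against an arbitrary $\phi\in L^2(\mu)$. By Fubini (or equivalently Plancherel applied after unfolding the definition of $\widehat{\phi\,d\mu}$), and using the fact that $f$ is supported in $B(0,R)$,
$$\int \hat f(x)\,\overline{\phi(x)}\,d\mu(x)=\int_{|\xi|\le R} f(\xi)\,\overline{\widehat{\phi\,d\mu}(\xi)}\,d\xi.$$
Cauchy--Schwarz bounds the right-hand side by
$$\|f\|_{L^2(\mathbb{R}^d)}\left(\int_{|\xi|\le R}\bigl|\widehat{\phi\,d\mu}(\xi)\bigr|^2\,d\xi\right)^{1/2},$$
and Lemma \ref{lemma1} controls the second factor by $CR^{(d-s_\mu)/2}\|\phi\|_{L^2(\mu)}$. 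Taking the supremum over $\phi$ with $\|\phi\|_{L^2(\mu)}\le 1$ yields $\|\hat f\|_{L^2(\mu)}^2\lesssim R^{d-s_\mu}\|f\|_{L^2(\mathbb{R}^d)}^2$, as desired.

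There is essentially no obstacle here: the argument is one line of $TT^*$ duality, and the only bookkeeping is to make sure the Fourier-convention signs and conjugations in the pairing identity above line up so that Lemma \ref{lemma1} applies directly to $\widehat{\phi\,d\mu}$ rather than to some twisted variant. An alternative, if one prefers to avoid duality, is to expand $\int|\hat f|^2\,d\mu$ as $\iint f(\xi)\overline{f(\eta)}\,\widehat{d\mu}(\eta-\xi)\,d\xi\,d\eta$ and run a Schur-type estimate using the trivial bound $|\widehat{d\mu}|\le 1$ against a smooth localizer at scale $R$, but the duality route is considerably cleaner.
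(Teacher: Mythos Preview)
Your proof is correct and is essentially identical to the paper's: the paper also pairs $\hat f$ against an arbitrary $h\in L^2(\mu)$ with $\|h\|_{L^2(\mu)}=1$, rewrites $\int \hat f\,h\,d\mu=\int_{|\xi|\le R} f\,\widehat{h\,d\mu}$, applies Cauchy--Schwarz, and invokes Lemma~\ref{lemma1}. Your framing of this as $TT^*$ duality is exactly the mechanism at work.
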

\begin{proof}
	For any $h \in L^2(\mu)$, $||h||_{L^2(\mu)}=1$,
$$\left(\int \hat{f}\, h\,d\mu\right)^2=\left(\int_{|\xi|\leq R} f\,\widehat{h\,d\mu}\right)^2\leq ||f||_{L^2(\R^d)}^2 \int_{|\xi|\leq R} |\widehat{h\,d\mu}|^2\lesssim R^{d-s_\mu}||f||_{L^2(\R^d)}^2,  $$
where the last inequality follows from Lemma \ref{lemma1}.
\end{proof}

Denote $d\omega_R$ as the normalized surface measure on $RS^{d-1}$, the sphere of radius $R$ centered at the origin.

\begin{lemma}\label{lemma3}
Suppose $E\subset\R^d$ and $\mu$ satisfies \eqref{Frostmanmeasure}, then
	$$\int_{RS^{d-1}}|\widehat{f\, d\mu}|^2\,d\omega_R\lesssim_\epsilon R^{-\frac{d+2s_\mu-2}{4}+\epsilon}||f||_{L^2(\mu)}^2. $$
\end{lemma}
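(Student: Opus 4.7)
My plan is to recast the statement as an operator bound and derive it from the Wolff--Erdogan spherical average machinery via a $TT^*$ argument. Define $T_R:L^2(\mu)\to L^2(\omega_R)$ by $T_R f=\widehat{f\,d\mu}|_{RS^{d-1}}$; the lemma asserts $\|T_R\|^2\lesssim_\epsilon R^{-\beta+\epsilon}$ with $\beta=(d+2s_\mu-2)/4$.

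First I would expand via Plancherel to obtain
\begin{equation*}
\int_{RS^{d-1}}|\widehat{f\,d\mu}(\xi)|^{2}\,d\omega_R(\xi)=\iint \widehat{\omega_R}(x-y)\,f(x)\,\overline{f(y)}\,d\mu(x)\,d\mu(y),
\end{equation*}
which reduces the claim to a bilinear inequality in $f$. A direct Schur-test using the Bessel-type bound $|\widehat{\omega_R}(\xi)|\lesssim \min(1,(R|\xi|)^{-(d-1)/2})$ together with the Frostman ball condition only recovers the trivial exponent $R^{-(d-1)/2}$, which corresponds to Falconer's $\frac{d+1}{2}$ threshold and is strictly weaker than Wolff--Erdogan whenever $s_\mu>d/2$.

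To go beyond this I would dualize: by $TT^*$, the stated inequality is equivalent to the extension-type estimate
\begin{equation*}
\|\widehat{g\,d\omega_R}\|_{L^{2}(\mu)}^{2}\lesssim_\epsilon R^{-\beta+\epsilon}\,\|g\|_{L^{2}(\omega_R)}^{2}\qquad \text{for all }g\in L^{2}(\omega_R),
\end{equation*}
which is precisely the fractal restriction inequality for the sphere proved, in essentially scale-invariant form, by Wolff \cite{Wol99} in the plane and Erdogan \cite{Erd05} in higher dimensions. Their argument proceeds via a wave-packet decomposition of $g$ over $R^{-1/2}$-caps of the sphere, a corresponding decomposition of $\mu$ on the dual scale, Tao's bilinear restriction theorem for the sphere applied to transverse cap pairs, and a Wolff-style induction on scales to reassemble the linear estimate.

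The main obstacle is exactly this last ingredient: the bilinear-restriction and induction-on-scales step of Wolff--Erdogan is the hard analytic input and accounts for the $\epsilon$ loss. The upgrade from the classical unweighted bound $\int_{S^{d-1}}|\widehat{\mu}(R\omega)|^{2}\,d\omega\lesssim_\epsilon R^{-\beta+\epsilon}$ to the weighted form stated in the lemma is by contrast soft, since the argument in \cite{Erd05} actually controls the operator norm of $T_R$; once the extension estimate is available, duality delivers Lemma \ref{lemma3}.
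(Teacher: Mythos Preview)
Your proposal is correct and follows essentially the same approach as the paper: both reduce the lemma by duality to the Wolff--Erdogan fractal restriction estimate for the sphere, then invoke that estimate as a black box. The only cosmetic difference is that the paper first thickens $RS^{d-1}$ to its $1$-neighborhood $A_R$ via the uncertainty principle and then dualizes against $h\in L^2(A_R)$, whereas you work directly with $g\in L^2(\omega_R)$; these formulations are equivalent and the hard input (equation (3.2) in the paper) is identical.
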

\begin{proof}
Denote $A_R$ as the $1$-neighborhood of $RS^{d-1}$. The strategy Wolff and Erdogan used to prove \eqref{Wolff-Erdogan} is the following. First by uncertainty principle,
$$\int_{RS^{d-1}}|\widehat{\mu}|^2\,d\omega_R\approx R^{-(d-1)}\int_{A_R}|\widehat{\mu}(\xi)|^2\,d\xi.$$ 
Then
$$\int_{A_R}|\widehat{\mu}|^2=\sup_{\substack{supp\,h\subset A_R\\||h||_2=1}} \left(\int h\,\widehat{\mu}\right)^2 = \sup_{\substack{supp\,h\subset A_R \\ ||h||_2=1}} \left(\int \hat{h}\,d\mu\right)^2\leq \sup_{\substack{supp\,h\subset A_R\\||h||_2=1}} \int |\hat{h}|^2\,d\mu. $$
What Wolff and Erdogan proved is, for any $h$ supported on $A_R$,
\begin{equation}\label{Wolff-Erdogan1}\int |\hat{h}|^2\,d\mu\lesssim_\epsilon R^{d-1-\frac{d+2 s_\mu-2}{4}+\epsilon}||h||_{L^2}^2.
\end{equation}

In our case, the uncertainty principle still works, i.e.,
$$\int_{RS^{d-1}}|\widehat{f\,d\mu}|^2\,d\omega_R\approx R^{-(d-1)}\int_{A_R}|\widehat{f\,d\mu}(\xi)|^2\,d\xi.$$
Then
$$\int_{A_R}|\widehat{f\,d\mu}|^2=\sup_{\substack{supp\,h\subset A_R\\||h||_2=1}} \left(\int h\,\widehat{f\,d\mu}\right)^2 = \sup_{\substack{supp\,h\subset A_R \\ ||h||_2=1}} \left(\int \hat{h}f\,d\mu\right)^2\leq ||f||_{L^2(\mu)}^2 \sup_{\substack{supp\,h\subset A_R\\||h||_2=1}} \int |\hat{h}|^2\,d\mu$$
and the lemma follows from Wolff-Erdogan's estimate \eqref{Wolff-Erdogan1}.
\end{proof}

\begin{lemma}
	\label{lemma4}
	Suppose $E\subset\R^d$ and $\mu$ satisfies \eqref{Frostmanmeasure}, then
$$\int |\widehat{\widehat{f\,d\mu}\,d\omega_R}|^2\,d\mu\lesssim_\epsilon R^{-\frac{d+2 s_\mu-2}{4}+\epsilon} \int_{RS^{d-1}}|\widehat{f\,d\mu}|^2\,d\omega_R. $$
\end{lemma}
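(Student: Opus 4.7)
The plan is to read Lemma \ref{lemma4} as the dual of Lemma \ref{lemma3}, so that it follows from a short $L^2(\mu)$ duality argument combined with Cauchy--Schwarz on the sphere $RS^{d-1}$. Setting $g=\widehat{f\,d\mu}$ (only its restriction to $RS^{d-1}$ ever appears), the natural operator to introduce is
$$T:L^2(\mu)\to L^2(\omega_R),\qquad Tf=\widehat{f\,d\mu}\big|_{RS^{d-1}}.$$
Lemma \ref{lemma3} is precisely the statement $\|T\|^2\lesssim_\epsilon R^{-(d+2s_\mu-2)/4+\epsilon}$. Since $\|T^*\|=\|T\|$, what needs to be established is the same bound for the adjoint operator $T^*g=\widehat{g\,d\omega_R}|_{\mathrm{supp}\,\mu}$, applied to $g=Tf$; this is exactly Lemma \ref{lemma4}.

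The first step is the Fubini identity: for any $h\in L^2(\mu)$,
$$\int \widehat{g\,d\omega_R}(x)\,h(x)\,d\mu(x)=\int_{RS^{d-1}} g(\xi)\,\widehat{h\,d\mu}(\xi)\,d\omega_R(\xi).$$
Next, Cauchy--Schwarz on $L^2(\omega_R)$ yields
$$\left|\int_{RS^{d-1}} g\,\widehat{h\,d\mu}\,d\omega_R\right|^2\leq \left(\int_{RS^{d-1}}|g|^2\,d\omega_R\right)\left(\int_{RS^{d-1}}|\widehat{h\,d\mu}|^2\,d\omega_R\right),$$
and Lemma \ref{lemma3} bounds the second factor by $C_\epsilon R^{-(d+2s_\mu-2)/4+\epsilon}\|h\|_{L^2(\mu)}^2$. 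Finally, taking the supremum over unit-norm $h\in L^2(\mu)$ converts the left-hand side into $\|\widehat{g\,d\omega_R}\|_{L^2(\mu)}^2$, and substituting back $g=\widehat{f\,d\mu}$ gives exactly the claimed inequality.

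I do not foresee a substantive obstacle: the proof is a one-line dualization of the Wolff--Erdogan bound once the restriction-to-sphere map $T$ is recognized as the formal adjoint of extension by $d\omega_R$. The only mild technical point is justifying Fubini and the dual characterization of the $L^2(\mu)$-norm; this can be handled by first assuming $f$ is Schwartz (so that $\widehat{f\,d\mu}$ is continuous, $g\,d\omega_R$ is a compactly supported measure, and $\widehat{g\,d\omega_R}$ is bounded on $\mathrm{supp}\,\mu$), and then passing to general $f\in L^2(\mu)$ via the uniform bounds supplied by Lemmas \ref{lemma1}--\ref{lemma3}.
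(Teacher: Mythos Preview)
Your proposal is correct and follows essentially the same approach as the paper's own proof: test against an arbitrary $h\in L^2(\mu)$ with $\|h\|_{L^2(\mu)}=1$, move to the sphere by Fubini/Parseval, apply Cauchy--Schwarz on $L^2(\omega_R)$, and invoke Lemma~\ref{lemma3} on the $h$-factor. The paper omits the technical density remarks you include, but the argument is otherwise identical.
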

\begin{proof}
	For any $h\in L^2(\mu)$, $||h||_{L^2(\mu)}=1$,
$$\left(\int \widehat{\widehat{f\,d\mu}\,d\omega_R}\,h\,d\mu\right)^2 = \left(\int_{RS^{d-1}} \widehat{h\,d\mu}\,\widehat{f\,d\mu}\,d\omega_R\right)^2\leq \left(\int_{RS^{d-1}}|\widehat{h\, d\mu}|^2\,d\omega_R\right)\left(\int_{RS^{d-1}}|\widehat{f\,d\mu}|^2\,d\omega_R\right).  $$
Since $||h||_{L^2(\mu)}=1$, by Lemma \ref{lemma3}, it is bounded above by
$$R^{-\frac{d+2 s_\mu-2}{4}+\epsilon} \int_{RS^{d-1}}|\widehat{f\,d\mu}|^2\,d\omega_R,  $$
as desired.
\end{proof}

\section{Proof of Theorem \ref{w-ineq}}
\subsection{Proof of \eqref{w-ineq1}}
Similar to \eqref{nu-2chain}, one can see
$$\iint |\nu_{f,g}(t_1, t_2)|^2\,dt_1\,dt_2= \iint |<T_{t_1}\circ T_{t_2}f, g>_\mu|^2\,dt_1\,dt_2. $$
Thus by \eqref{nu^2fg} and Cauchy-Schwartz, it suffices to show 
$$\mathcal{M}^\epsilon=\iint \left|\int_{O(d)}\int \mu_f^\epsilon(u+x)\mu_f^\epsilon(u'+\theta x)\,dx\,d\theta \right|^2 d\mu(u)\,d\mu(u') \lesssim ||f||^2_{L^2(\mu)},\ \text{if}\ s_\mu>\frac{d}{2}+\frac{1}{3}. $$

By Plancherel in $x$, it equals
$$\iint\left|\int_{O(d)}\int\widehat{\mu_f^\epsilon}(\xi)\,e^{-2\pi i \xi\cdot u'}\, \widehat{\mu_f^\epsilon}(-\theta^t\xi) \,e^{2\pi i \xi\cdot (\theta u)}\,d\xi\,d\theta\right|^2\,d\mu(u)\,d\mu(u').$$

Since $\widehat{\mu_f^\epsilon}(\xi) = \widehat{f\,d\mu}(\xi)\widehat{\phi}(\epsilon \xi)$, $\phi\in C_0^\infty$, it suffices to show that
\begin{equation}\label{LPpiece}\begin{aligned}\mathcal{M}_j = &\iint\left|\int_{O(d)}\int_{|\xi|\approx 2^j}\widehat{f\,d\mu}(\xi)\,e^{-2\pi i \xi\cdot u'}\, \widehat{f\,d\mu}(-\theta^t\xi) \,e^{2\pi i \xi\cdot (\theta u)}\,d\xi\,d\theta\right|^2\,d\mu(u)\,d\mu(u')\\\lesssim & \ 2^{-j\gamma}||f||^2_{L^2(\mu)} \end{aligned}\end{equation}
for some $\gamma>0$. Then $\sqrt{\mathcal{M}^\epsilon}\leq\sum\limits_{j\leq -\log \epsilon}\sqrt{\mathcal{M}_j}<\sum\sqrt{\mathcal{M}_j} \lesssim||f||^2_{L^2(\mu)}$.

Denote
$$F^j_{u}(\xi) = \chi_{\{|\xi|\approx 2^j\}}\, \widehat{f\,d\mu}(\xi)\int_{O(d)}\widehat{f\,d\mu}(-\theta^t\xi)\,e^{-2\pi i \xi\cdot (\theta u)}\,d\theta= \chi_{\{|\xi|\approx 2^j\}}\, \widehat{f\,d\mu}(\xi)\,\widehat{\widehat{f\,d\mu}\,d\omega_{|\xi|}}(u).$$

Then 
$$\mathcal{M}_j= \iint|\widehat{F^j_{u}}(u')|^2\,d\mu(u')\,d\mu(u)  $$
and by Lemma \ref{lemma2} it is less than or equal to
\begin{equation*}
\begin{aligned}
&2^{j(d-s_\mu)}\iint|F^j_{u}(\xi)|^2\,d\xi\,d\mu(u)\\=&2^{j(d-s_\mu)} \int\int_{|\xi|\approx 2^j} |\widehat{f\,d\mu}(\xi)|^2 \,|\widehat{\widehat{f\,d\mu}\,d\omega_{|\xi|}}(u)|^2\,d\xi\,d\mu(u)\\=&2^{j(d-s_\mu)} \int_{|\xi|\approx 2^j} |\widehat{f\,d\mu}(\xi)|^2 \,\left(\int|\widehat{\widehat{f\,d\mu}\,d\omega_{|\xi|}}(u)|^2\,d\mu(u)\right)d\xi.
\end{aligned}
\end{equation*}

Fix $\xi$, by Lemma \ref{lemma4},
\begin{equation}
	\label{common}
	\begin{aligned}
	\mathcal{M}_j&\lesssim_\epsilon 2^{j(d-s_\mu)} \int_{|\xi|\approx 2^j} |\widehat{f\,d\mu}(\xi)|^2 |\xi|^{-\frac{d+2s_\mu-2}{4}+\epsilon} \left(\int_{|\xi|S^{d-1}}|\widehat{f\,d\mu}|^2\,d\omega_{|\xi|} \right)\,d\xi\\&\approx\  2^{j(d-s_\mu-\frac{d+2s_\mu-2}{4}+\epsilon)} \int_{|\xi|\approx 2^j} |\widehat{f\,d\mu}(\xi)|^2 \left(\int_{|\xi|S^{d-1}}|\widehat{f\,d\mu}|^2\,d\omega_{|\xi|} \right)\,d\xi.
	\end{aligned}
\end{equation}

Apply Lemma \ref{lemma3}, \ref{lemma1}, it follows that
\begin{equation*}
\begin{aligned}\mathcal{M}_j &\lesssim_\epsilon 2^{j(d-s_\mu-\frac{d+2s_\mu-2}{4}+\epsilon)}\cdot 2^{j(-\frac{d+2s_\mu-2}{4}+\epsilon)}\cdot 2^{j(d-s_\mu)}||f||^2_{L^2(\mu)}\\&=\  2^{j(\frac{3d-6s_\mu-2}{2}+\epsilon)}||f||^2_{L^2(\mu)},
\end{aligned}
\end{equation*}
which is summable in $j$ when $s_\mu>\frac{d}{2}+\frac{1}{3}$, as desired.
\subsection{Proof of \eqref{w-ineq2}}
The proof of \eqref{w-ineq2} is quite similar. It suffices to show 
$$\iiint \left|\int_{O(d)}\int \mu_f^\epsilon(u+x)\mu_f^\epsilon(u'+r\theta x)\,dx\,d\theta \right|^2 d\mu(u)\,d\mu(u')\,dr \lesssim ||f||^2_{L^2(\mu)},\ \text{if}\ s_\mu>\frac{d}{2}+\frac{1}{7}. $$
By Plancherel in $x$, one gets
$$\int_{r\approx 1}\iint\left|\int_{O(d)}\int\widehat{\mu_f^\epsilon}(\xi)\,e^{-2\pi i \xi\cdot u'}\, \widehat{\mu_f^\epsilon}(-r\theta^t\xi) \,e^{2\pi i \xi\cdot (r\theta u)}\,d\xi\,d\theta\right|^2\,d\mu(u)\,d\mu(u')\,dr$$and it suffices to show
\begin{equation*}\begin{aligned}\mathcal{M}_j = &\int_{r\approx 1}\iint\left|\int_{O(d)}\int_{|\xi|\approx 2^j}\widehat{f\,d\mu}(\xi)\,e^{-2\pi i \xi\cdot u'}\, \widehat{f\,d\mu}(-r\theta^t\xi) \,e^{2\pi i \xi\cdot (r\theta u)}\,d\xi\,d\theta\right|^2\,d\mu(u)\,d\mu(u')\,dr\\\lesssim &\ 2^{-j\gamma}||f||^2_{L^2(\mu)}\end{aligned}\end{equation*}
for some $\gamma>0$. Compared with the $\mathcal{M}_j$ above in \eqref{LPpiece}, the only difference is that we need to integrate $dr$. So every step in the last subsection before \eqref{common} still works and \eqref{common} becomes
\begin{equation*}
\begin{aligned}
&2^{j(d-s_\mu-\frac{d+2s_\mu-2}{4}+\epsilon)} \int_{|\xi|\approx 2^j} |\widehat{f\,d\mu}(\xi)|^2 \left(\int_{r\approx 1}\int_{RS^{d-1}}|\widehat{f\,d\mu}(r|\xi|\omega)|^2\,d\omega\,dr \right)\,d\xi \\ \approx &\,2^{j(d-s_\mu-\frac{d+2s_\mu-2}{4}+\epsilon)} \int_{|\xi|\approx 2^j} |\widehat{f\,d\mu}(\xi)|^2 \,|\xi|^{-d}\left(\int_{|\eta|\approx 2^j}|\widehat{f\,d\mu}(\eta)|^2\,d\eta \right)\,d\xi.
\end{aligned}
\end{equation*}
By Lemma \ref{lemma1}, it is less than or equal to
$$2^{j(d-s_\mu-\frac{d+2s_\mu-2}{4}+\epsilon)}\cdot 2^{j(d-2s_\mu)}||f||^2_{L^2(\mu)}= 2^{j(\frac{7d-14s_\mu+2}{4}+\epsilon)}||f||^2_{L^2(\mu)},$$
which is summable in $j$ if $s_\mu>\frac{d}{2}+\frac{1}{7}$, as desired.

\bibliographystyle{abbrv}
\bibliography{/Users/MacPro/Dropbox/Academic/paper/mybibtex.bib}

\begin{thebibliography}{10}

\bibitem{BIT16}
M.~Bennett, A.~Iosevich, and K.~Taylor.
\newblock Finite chains inside thin subsets of {$\Bbb{R}^d$}.
\newblock {\em Anal. PDE}, 9(3):597--614, 2016.

\bibitem{Erd05}
M.~B. Erdogan.
\newblock A bilinear {F}ourier extension theorem and applications to the
  distance set problem.
\newblock {\em Int. Math. Res. Not.}, (23):1411--1425, 2005.

\bibitem{EIT11}
S.~Eswarathasan, A.~Iosevich, and K.~Taylor.
\newblock Fourier integral operators, fractal sets, and the regular value
  theorem.
\newblock {\em Adv. Math.}, 228(4):2385--2402, 2011.

\bibitem{Fal85}
K.~J. Falconer.
\newblock On the {H}ausdorff dimensions of distance sets.
\newblock {\em Mathematika}, 32(2):206--212, 1985.

\bibitem{Fed69}
H.~Federer.
\newblock {\em Geometric measure theory}.
\newblock Die Grundlehren der mathematischen Wissenschaften, Band 153.
  Springer-Verlag New York Inc., New York, 1969.

\bibitem{GGIP15}
L.~Grafakos, A.~Greenleaf, A.~Iosevich, and E.~Palsson.
\newblock Multilinear generalized {R}adon transforms and point configurations.
\newblock {\em Forum Math.}, 27(4):2323--2360, 2015.

\bibitem{GI12}
A.~Greenleaf and A.~Iosevich.
\newblock On triangles determined by subsets of the {E}uclidean plane, the
  associated bilinear operators and applications to discrete geometry.
\newblock {\em Anal. PDE}, 5(2):397--409, 2012.

\bibitem{GILP15}
A.~Greenleaf, A.~Iosevich, B.~Liu, and E.~Palsson.
\newblock A group-theoretic viewpoint on {E}rd{\H o}s-{F}alconer problems and
  the {M}attila integral.
\newblock {\em Rev. Mat. Iberoam.}, 31(3):799--810, 2015.

\bibitem{GILP16}
A.~Greenleaf, A.~Iosevich, B.~Liu, and E.~Palsson.
\newblock An elementary approach to simplexes in thin subsets of euclidean
  space.
\newblock {\em http://arxiv.org/abs/1608.04777}, 2016.

\bibitem{GK15}
L.~Guth and N.~H. Katz.
\newblock On the {E}rd{\H o}s distinct distances problem in the plane.
\newblock {\em Ann. of Math. (2)}, 181(1):155--190, 2015.

\bibitem{IL17}
A.~Iosevich and B.~Liu.
\newblock Pinned distance problem, slicing measures and local smoothing
  estimates.
\newblock {\em arXiv preprint arXiv:1706.09851}, 2017.

\bibitem{IME16}
A.~Iosevich, M.~Mourgoglou, and E.~A. Palsson.
\newblock On angles determined by fractal subsets of the {E}uclidean space.
\newblock {\em Math. Res. Lett.}, 23(6):1737--1759, 2016.

\bibitem{ITU16}
A.~Iosevich, K.~Taylor, and I.~Uriarte-Tuero.
\newblock Pinned geometric configurations in euclidean space and riemannian
  manifolds.
\newblock {\em https://arxiv.org/pdf/1610.00349v1.pdf}, 2016.

\bibitem{Liu17}
B.~Liu.
\newblock Group actions, the mattila integral and continuous sum-product
  problems.
\newblock {\em arXiv preprint arXiv:1705.00560}, 2017.

\bibitem{Mat87}
P.~Mattila.
\newblock Spherical averages of {F}ourier transforms of measures with finite
  energy; dimension of intersections and distance sets.
\newblock {\em Mathematika}, 34(2):207--228, 1987.

\bibitem{Mat95}
P.~Mattila.
\newblock {\em Geometry of Sets and Measures in Euclidean Spaces: Fractals and
  Rectifiability}, volume~44 of {\em Cambridge Studies in Advanced
  Mathematics}.
\newblock Cambridge University Press, Cambridge, 1995.

\bibitem{PS00}
Y.~Peres and W.~Schlag.
\newblock Smoothness of projections, {B}ernoulli convolutions, and the
  dimension of exceptions.
\newblock {\em Duke Math. J.}, 102(2):193--251, 2000.

\bibitem{Wol99}
T.~Wolff.
\newblock Decay of circular means of {F}ourier transforms of measures.
\newblock {\em Internat. Math. Res. Notices}, (10):547--567, 1999.

\bibitem{Wol03}
T.~H. Wolff.
\newblock {\em Lectures on harmonic analysis}, volume~29 of {\em University
  Lecture Series}.
\newblock American Mathematical Society, Providence, RI, 2003.
\newblock With a foreword by Charles Fefferman and preface by Izabella \L aba,
  Edited by \L aba and Carol Shubin.

\end{thebibliography}

\end{document}